\newtheorem{lem}{Lemma}
\newtheorem{lemma}[lem]{Lemma}
\newtheorem{thm}{Theorem}
\newtheorem{theorem}[thm]{Theorem}
\def\\{\cr}
\def\({\left(}
\def\){\right)}
\def\[{\left[}
\def\]{\right]}
\def\<{\langle}
\def\>{\rangle}
\def\al{{\alpha}}
\def\be{{\beta}}
\def\ep{{\varepsilon}}
\def\cD{{\mathcal D}}
\def\cF{{\mathcal F}}
\def\Z{{\mathbb Z}}
\def\C{{\mathbb C}}
\begin{document}

\title{On members of Lucas sequences which are either products of factorials or product of 
middle binomial coefficients and Catalan numbers}

\author{{\sc Shanta~Laishram}\\
{Stat-Math Unit, Indian Statistical Institute}\\
{7, S. J. S. Sansanwal Marg, New Delhi, 110016, India}\\
{shanta@isid.ac.in}}
\date{} %\today}
%AMS{11B39, 11B65, 11N13, 11J86, 11C08}
\pagenumbering{arabic}

\maketitle

\begin{abstract}
Let $\{U_n\}_{n\ge 0}$ be a Lucas sequence. Then the equation 
$$|U_n|=m_1!m_2!\cdots m_k!$$ 
with $1<m_1\le m_2\le \cdots\le m_k$ implies $n\in \{1,2, 3, 4, 6, 8, 12\}$.  Further the equation 
$$|U_n|=D_{m_1}D_{m_2}\cdots D_{m_k}, \qquad D_{m_i}\in \{B_{m_i}, C_{m_i}\}$$ 
with $1<m_1\le m_2\le \cdots\le m_k$ implies 
$n\in \{1,2, 3, 4, 6, 8, 12, 16\}$. Here $B_m$ is the middle binomial coefficient 
$\binom{2m}{m}$ and $C_m$ is the Catalan number $\frac{1}{m+1}\binom{2m}{m}$. 
\end{abstract}

\section{Introduction}

Let $r,~s$ be coprime nonzero integers with $r^2+4s\ne 0$. Let $\alpha,~\beta$ be the roots of the quadratic equation 
$x^2-rx-s=0$ 
and assume without loss of generality that $|\al|\geq \be|$. We assume further that 
$\alpha/\beta$ is not a root of $1$. The Lucas sequences $\{U_n\}_{n\ge 0}$ and $\{V_n\}_{n\ge 0}$ of parameters $(r,s)$ are given by 
$$
U_n=\frac{\alpha^n-\beta^n}{\alpha-\beta}\qquad {\text{\rm and}}\qquad V_n=\alpha^n+\beta^n\qquad {\text{\rm for~all}}\qquad n\ge 0.
$$
Alternatively, they can be defined recursively as $U_0=0,~U_1=1,~V_0=2,~V_1=r$ and both recurrences
$$
U_{n+2}=rU_{n+1}+sU_n\quad {\text{\rm and}}\quad V_{n+2}=rV_{n+1}+sV_n\qquad  {\text{\rm hold for all}}\quad n\ge 0.
$$ 
Let 
$$
{\mathcal PF}:=\{\pm 1\}\cup \{\pm \prod_{j=1}^k m_j!: 1<m_1\le m_2\le\cdots \le m_k~{\text{\rm and}}~k\ge 1\}
$$
be the set of integers whose absolute values are the product of factorials $>1$.  Members of ${\mathcal PF}$ are 
sometimes called {\it Jordan-Polya} numbers and several recent papers investigate their arithmetic properties. 
It was shown in \cite{LLS-F} that $U_n\in {\mathcal PF}$ implies 
$n<62000$. We reduce this bound to $n\in \{1, 2, 3, 4, 6, 8, 12\}$ in the following theorem.

\begin{theorem}\label{thmF}
The equation  
\begin{align}\label{eqn1}
U_n=\pm m_1!m_2!\cdots m_k! \quad { where} \ k\geq 1 \quad { and} \quad 1<m_1\leq m_2\leq \cdots \leq m_k
\end{align}
implies $n\in \{1, 2, 3, 4, 6, 8, 12\}$. 

The equation
\begin{align}\label{eqn2}
V_n=\pm m_1!m_2!\cdots m_k, \qquad 1<m_1\leq m_2\leq \cdots \leq m_k.
\end{align}
implies $n=2$ or $n$ is an odd prime $\leq 787$. Further $n\in \{1, 2, 3\}$ when $\alpha, \beta $ are real. 
\end{theorem}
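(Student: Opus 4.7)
My plan is to combine the Bilu--Hanrot--Voutier (BHV) primitive prime divisor theorem with a $2$-adic valuation squeeze to cap $n$ in a small explicit window, then finish by a finite case analysis. The uniform starting point in both equations is: if $|U_n| = m_1!\cdots m_k!$, then every prime dividing $U_n$ is at most $m_k$. BHV guarantees (outside its explicit list of exceptional triples) a primitive prime $p \mid U_n$ whose rank is $n$, so $n \mid p - (D/p)$ and hence $p \ge n-1$; this forces $m_k \ge n-1$ and in particular $|U_n| \ge (n-1)!$. The analogous statement for $V_n$ comes from applying BHV to $U_{2n}$: a primitive prime of $U_{2n}$ has rank $2n$, and since its rank exceeds $n$ it divides $V_n = U_{2n}/U_n$, yielding $m_k \ge 2n-1$.

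For \eqref{eqn1} the decisive estimate is $2$-adic. By Legendre,
\begin{align*}
v_2\bigl(m_1!\cdots m_k!\bigr) \ge v_2(m_k!) = m_k - s_2(m_k) \ge n - 2 - \log_2 n,
\end{align*}
while the Lucas-sequence lifting-the-exponent identity yields $v_2(U_n) \le v_2(U_\tau) + v_2(n) + O(1)$, where $\tau \in \{1,2,3,6\}$ is the rank of apparition of $2$ (if $2 \nmid U_n$ for every $n$ the equation forces $m_k=1$ and we are done). The bound $|U_n| \ge (n-1)!$ combined with the growth estimate $|U_n| \le 2|\alpha|^n$ forces $|\alpha|$ to be at least of order $n/e$, so $v_2(U_\tau) \le \log_2|U_\tau| \le \tau \log_2|\alpha| = O(\log n)$, giving $v_2(U_n) = O(\log n)$. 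Matching this against the factorial lower bound caps $n \le N_0$ for an explicit modest $N_0$. For $n \le N_0$ I would invoke the cyclotomic factorization $U_n = \prod_{d \mid n}\Phi_d(\alpha,\beta)$ and the constraint $m_k \ge n-1$ to exclude all $n$ outside $\{1,2,3,4,6,8,12\}$; the finitely many BHV exceptional triples $(r,s,n)$ are handled by direct computation.

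For \eqref{eqn2} two further steps are needed. First, I would reduce to $n=2$ or $n$ prime by exploiting $V_d \mid V_n$ whenever $n/d$ is odd, together with $V_{2k} = V_k^2 - 2(-s)^k$: any composite structure in $n$ other than $n=2$ produces a nontrivial factorization of $V_n$ whose pieces cannot simultaneously be Jordan--Polya under the bound $m_k \ge 2n-1$ and the growth $|V_n| \le 2|\alpha|^n$. Second, for $n=p$ an odd prime the $2$-adic route breaks down (since $v_2(V_p)$ is bounded independently of $p$), so instead I would compare $(2p-1)! \le |V_p| \le 2|\alpha|^p$ against an archimedean lower bound on $|\alpha|$ derived from $\gcd(r,s)=1$ and the integrality of $V_p$ as a polynomial in $r,s$; Stirling then forces $p \le 787$. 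When $\alpha, \beta$ are real one has $|\alpha| \ge (1+\sqrt 5)/2$, so $|V_n|$ grows far slower than $(2n-1)!$ and one reduces rapidly to $n \in \{1,2,3\}$.

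The principal obstacle I expect is the odd-prime cutoff $p \le 787$ in \eqref{eqn2}: the $2$-adic squeeze loses its bite here, and the bound must be extracted by a careful archimedean comparison that accounts for the minimum $|\alpha|$ compatible with $(r,s) \in \Z^2$, $\gcd(r,s)=1$, and $V_p$ being a product of factorials; pinning down $787$ exactly will require a numerical sweep rather than a clean closed form. A secondary difficulty is the composite-$n$ reduction for $V_n$, which must control potential common factors between $V_d$ and $V_n/V_d$ supported on divisors of $2s$ in order to force both pieces to be Jordan--Polya. The small-$n$ enumeration for \eqref{eqn1}, while conceptually routine, requires explicit elimination on each $n \le N_0$ using cyclotomic-polynomial arithmetic and the constraint $m_k \ge n-1$.
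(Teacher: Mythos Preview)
Your $2$-adic squeeze for \eqref{eqn1} contains a genuine gap. You derive $|\alpha|\ge n/e$ (a \emph{lower} bound, from $|U_n|\ge (n-1)!$) and then write ``so $v_2(U_\tau)\le \tau\log_2|\alpha|=O(\log n)$'', using it as an \emph{upper} bound. There is no uniform upper bound on $|\alpha|$ over all Lucas sequences: for instance with $r=1$, $s=-(2^N+1)$ one has $v_2(U_3)=N$ while $|\alpha|=\sqrt{2^N+1}$. Carried through correctly, your inequalities give
\[
v_2(\mathcal F)=v_2(U_n)\le 5\log_2|\alpha|+v_2(n)+O(1),\qquad \log_2\mathcal F\le n\log_2|\alpha|+O(1),
\]
so $v_2(\mathcal F)/\log_2\mathcal F\lesssim 5/n$; but since $v_2(m!)/\log_2(m!)\sim 1/\log_2 m$, this is perfectly compatible with $m_k\sim 2^{n/5}$ and places no bound on $n$ at all. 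The $2$-adic valuation is simply too crude here: it only forces $|\alpha|$ and $m_k$ to be exponentially large in $n$, not that solutions are impossible.

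What the paper does instead is to compare, for a suitable divisor $n_0\mid n$, the quantity $M_{n_0}(\ell)=\sum_{p\equiv\pm1\,(n_0)}\nu_p(\ell)\log p$ on both sides. On the $U_n$ side, the cyclotomic factorisation shows that the primitive parts of $U_d$ for $n_0\mid d\mid n$ (all of whose prime factors are $\equiv\pm1\pmod{n_0}$) already account for a fixed positive proportion of $\log|U_n|$, uniformly in $|\alpha|$. On the $\mathcal F$ side, primes in the two residue classes $\pm1\pmod{n_0}$ have density $2/\varphi(n_0)$, so $M_{n_0}(\mathcal F)\le \frac{c}{\varphi(n_0)}\log\mathcal F$ with an explicit $c$. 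Equating the two forces $\varphi(n_0)$ to be bounded, hence $n$ is bounded; the delicate residual cases ($n=16,24,\dots$) are then eliminated by sharper versions of the same inequality. This residue-class sieve, not a single-prime valuation, is the missing idea.

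Two further points. Your reduction for $V_n$ to prime $n$ assumes that a factorisation $V_n=V_d\cdot(V_n/V_d)$ forces each factor to be Jordan--P\'olya; it does not, so composite $n$ must be excluded by the same $M_{n_0}$ comparison. And the cutoff $p\le 787$ for $V_p$ in the paper comes from a quantitative linear-forms-in-logarithms bound (Lemma~\ref{p-fp}, via the Mignotte appendix to BHV) applied to $|1\pm(\beta/\alpha)^p|$, not from a Stirling comparison: when $\alpha,\beta$ are complex conjugates one has $|\alpha|=|\beta|$, so there is no archimedean growth of $|V_p|/|\alpha|^p$ to exploit, and Baker's method is essential.
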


It was proved in \cite{LLS-F} that for real $\alpha, \beta $,  the equation \eqref{eqn1} implies 
$n\in \{1,2, 3, 4, 6,12\}$ and the equation \eqref{eqn2} implies $n\in \{1, 2, 3\}$.  We believe that 
that  the equation \eqref{eqn1} has no solution for $n=8$ which we are not able to prove. 

Let 
$$
B_m:=\binom{2m}{m}\quad {\text{\rm and}}\quad C_m:=\frac{1}{m+1}\binom{2m}{m}\quad {\text{\rm for}}\quad m\ge 0,
$$
be the middle binomial coefficient and Catalan number, respectively. For each $m$, we write $D_m$ 
for one of the numbers $B_m,C_m$.  Let 
$$
{\mathcal PBC}:=\{\pm \prod_{j=1}^k D_{m_j}: D_m\in \{B_m,C_m\}, ~k\ge 1,~ 1\le m_1\le m_2\le\cdots \le m_k\}
$$
be the set of integers which are products of middle binomial coefficients and Catalan numbers. It was proved 
in \cite{LLS-C} that if $U_n\in {\mathcal PBC}$, then $n<6500$ if $n$ is odd and $n\leq 720$ if $n$ is even. 
We reduce this bound to $n\in \{1, 2, 3, 4, 6, 8, 12\}$ in the following theorem. 

\begin{theorem}\label{thmC}
For each $m$, let $D_m\in \{B_m,C_m\}$. The equation  
\begin{align}\label{eqn3}
U_n=\pm D_{m_1} D_{m_2}\cdots D_{m_k},\quad {\text{where}}\quad k\ge 1 \quad {\text{and}}\quad 1\le m_1\le \cdots\le m_k,
\end{align}
implies $n\in \{1, 2, 3, 4, 6, 8, 12, 16\}$. 

The equation
\begin{align}\label{eqn4}
V_n=\pm D_{m_1} D_{m_2}\cdots D_{m_k},\quad {\text{where}}\quad k\ge 1 \quad {\text{and}}\quad 1\le m_1\le \cdots\le m_k,
\end{align}
implies $n\leq 787$ is prime or $n=2p$ for a prime $p\leq 397$. Further, $n\in \{1, 2, 3, 6\}$
when $\alpha, \beta $ are real.    
\end{theorem}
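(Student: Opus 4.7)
The starting point is the bound from \cite{LLS-C}: if $U_n \in \mathcal{PBC}$ then $n < 6500$ for $n$ odd and $n \le 720$ for $n$ even, and an analogous bound (not stated in the excerpt but surely proved there) for $V_n$. The plan is to shrink these finite ranges to the exceptional sets in the statement by combining a primitive-divisor argument with size estimates, and finishing the residual cases by direct computation.

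The central tool is the Bilu--Hanrot--Voutier theorem: for every $n > 30$, the term $U_n$ (resp.\ $V_n$) admits a primitive prime divisor $p$, which moreover satisfies the congruence $p \equiv \pm 1 \pmod{n}$ (so $p \ge n-1$). Suppose $U_n = \pm D_{m_1}\cdots D_{m_k}$. Then $p$ divides some factor $D_{m_j}$. Since the largest prime factor of either $B_m$ or $C_m$ is at most $2m$ (it is in fact at most $m$ for $C_m$ apart from small perturbations), this forces $m_j \ge (p-1)/2 \ge (n-2)/2$. Consequently one of the $D_{m_j}$ appearing in the product is at least of order $B_{(n-2)/2} \sim 4^{n/2}/\sqrt{n}$. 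Because this $D_{m_j}$ divides $|U_n|$, we obtain the lower estimate
\[
|U_n| \;\gtrsim\; \frac{4^{n/2}}{\sqrt{n}}.
\]
On the other hand, the Binet formula yields $|U_n| \le (|\alpha|^n + |\beta|^n)/|\alpha-\beta|$, and similarly for $V_n$. When the roots satisfy $|\alpha|=|\beta|$ (the imaginary quadratic case) or $|\alpha|$ is small, this upper bound is incompatible with the lower bound except for very small $n$, which closes out most of the range.

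For the cases where $|\alpha|$ is moderately large, one pushes the argument further by extracting several distinct primitive or intrinsic prime divisors. Specifically, writing $U_n = \prod_{d\mid n}\Phi_d(\alpha,\beta)$, each $\Phi_d(\alpha,\beta)$ with $d \ge 30$ contributes its own primitive prime $p_d \ge d-1$, and each such prime again forces an $m_j \ge (d-1)/2$. Summing the resulting contributions $\sum D_{m_j}$ from all large divisors $d$ of $n$ and comparing with $|U_n|\sim |\alpha|^n$ gives quickly a sharp upper bound on $n$; this is what is expected to reduce the odd range below $6500$ to only a handful of $n$, and the even range below $720$ to the values in $\{2,4,6,8,12,16\}$. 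For $V_n$, where $V_n = \alpha^n + \beta^n$ has no divisor structure analogous to $U_n$ unless $n = 2^a\cdot m$ (where $V_n \mid U_{2n}$), one instead uses the factorization $V_{2m} = V_m^2 - 2(-s)^m$ together with the primitive divisor of $U_n$ to show that a composite $n$ must be of the form $2p$, leaving the prime $n$ case to be bounded via the same sort of cyclotomic + Binet size estimate, which yields $n\le 787$.

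The final step is a finite verification. For each of the finitely many remaining pairs $(n, (r,s))$ one checks all admissible factorisations $D_{m_1}\cdots D_{m_k}$. Here one uses that $D_{m_j} \le |U_n|$ bounds $m_j$, that the $2$-adic and $3$-adic valuations of $B_m,C_m$ are given by explicit Kummer-type formulae, and that specific large primitive prime factors of $U_n$ can be computed and matched to the primes dividing the middle binomial coefficients. In the real case $\alpha,\beta\in\mathbb R$ one has access to the stronger conclusions proved in \cite{LLS-C} (and the factorial analogue in \cite{LLS-F}), which immediately yield $n\in\{1,2,3,6\}$ for $V_n$.

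The main obstacle is the third paragraph: obtaining a clean reduction for $V_n$ when $n$ is an odd prime. There the cyclotomic factorisation is trivial, so one cannot harvest many primitive prime divisors, and the inequality $|V_n|\gtrsim 4^{n/2}/\sqrt n$ must be balanced delicately against $|V_n|\le 2|\alpha|^n$, which only kicks in when $|\alpha|<2$. Handling the residual primes $p\le 787$ individually (and ensuring no sporadic solution with $|\alpha|$ just above $2$ is missed) is where the bulk of the technical work is expected to lie.
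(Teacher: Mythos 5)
The proposal misses the paper's central mechanism. The key object in the paper is the quantity
$$M_{n_0}(\ell)=\sum_{\substack{p^{\nu_p}\|\ell\\ p\equiv\pm1\pmod{n_0}}}\nu_p\log p,$$
applied simultaneously to $X_n\in\{|U_n|,|V_n|\}$ and to $Y\in\{\mathcal{F},\mathcal{D}\}$. On the $X_n$ side, Lemma~\ref{primn_0} shows via cyclotomic factorisations that when $n_0\mid n$ with $n_0=q^h$ or $q^hq_1^{h_1}$, a \emph{large proportion} of $\log|X_n|$ sits in primes $\equiv\pm1\pmod{n_0}$ --- essentially $\bigl(1-\tfrac{1}{q}\bigr)\log|X_n|$ or more. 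On the $Y$ side, Lemmas~\ref{MnBDF}--\ref{MnBDC} (inherited from the earlier papers) show that for a product of factorials or of middle binomials/Catalans, only a \emph{small} fraction of $\log Y$ can live in that residue class, roughly $\tfrac{c}{\varphi(n_0)}\log Y$. Equating $X_n=Y$ and comparing these two estimates of the \emph{same} quantity $M_{n_0}$ is what rules out all but small $n$. Your proposal never forms this quantity and never invokes the density restriction on primes $\equiv\pm1\pmod{n_0}$ inside $B_m$ and $C_m$, which is the engine of the whole argument.

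The replacement argument you give does not close the gap. The estimate $|U_n|\gtrsim 4^{n/2}/\sqrt{n}$ (from one large factor $D_{m_j}$) is compared against $|U_n|\lesssim 2|\alpha|^n$; but these are not in conflict once $|\alpha|\ge 2$, and $|\alpha|$ can be arbitrarily large, so this only disposes of a thin slice of parameters. The attempted fix --- ``extract several distinct primitive primes from each $\Phi_d(\alpha,\beta)$ with $d\ge 30$ and sum the contributions $\sum D_{m_j}$'' --- is also not sound: distinct primitive primes $p_{d_1},p_{d_2},\dots$ can all divide the \emph{same} factor $D_{m_k}$, so they do not produce several independent large factors, and the sum you envisage does not exceed $\log|U_n|$ in a way that yields a contradiction. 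What is actually being compared in the paper is not the sizes of factors but the arithmetic-progression concentration of the prime factorisation. Finally, the $V_n$-prime case is not handled by a Binet-versus-Catalan size bound: for $n=q$ prime the paper must invoke a linear form in two logarithms (Lemma~\ref{p-fp}, based on Laurent--Mignotte--Nesterenko-type estimates) to force either $\log|\alpha|<1.35(1-f_q)q$ or an immediate contradiction; without something of that strength one cannot cut the prime case down to $q\le 787$ (respectively $q\le 397$ for $n=2q$). Your paragraph on $V_n$ correctly identifies this as the hard spot but offers no mechanism to resolve it.
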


It was proved in \cite{LLS-C} that for real $\alpha, \beta $,  the equation \eqref{eqn3} implies 
$n\in \{1,2, 3, 4, 6, 8, 12\}$ and the equation \eqref{eqn4} implies $n\in \{1, 2, 3, 6\}$.  We believe that 
that  the equation \eqref{eqn3} has no solution for $n=16$ which we are not able able to prove. 

It was conjectured in \cite{LLS-F} that there are only finitely many solutions of \eqref{eqn1} with 
$n\in \{6, 12\}$ regardless or whether 
$\alpha,\beta$ are real or complex conjugates unconditionally which is still open. In fact it was also 
conjectured that the solutions of equation \eqref{eqn1} are given by 
$(n; r, s)=(12; \pm 1, 1), U_n=\pm 144$ and 
\begin{align*}
(n; r, s)\in \begin{split}
\{&(5; \pm 1, -2), (5; 1, -3), (5, \pm 12, -55), \\
&(5; 12, -377), (7; \pm 1, -5),  (13; \pm 1, -2)\} \quad {\rm where} \ U_n=\pm 1
\end{split}
\end{align*} 
and $n=6$
\begin{align*}
(r, s)\in \begin{split}
\{&(\pm 1,  5),  (\pm 1, -7), (\pm 1,  53), (\pm 3, -1), (\pm 3, -5), (\pm 3,  7), (\pm 3, -7),\\
& (\pm 3, -11),  (\pm 3, -19),  (\pm 3,  61),  (\pm 3, -73), (\pm 5, -19),  (\pm 5,  29),  \\
&(\pm 6, -97),   (\pm 9, -11),   (\pm 9, -17), (\pm 9, -91), (\pm 9, -97)\}.
\end{split}
\end{align*} 
It was also conjectured in \cite{LLS-F} that the only solutions $(r, s, n)$ of \eqref{eqn2} with 
$n\geq 4$ are given by $(r, s, n)\in \{(\pm 1, 1, 6), (\pm 2, -1, 4)\}$. We refer to \cite{LLS-F} for more 
details. We also refer to \cite{LLS-C} for results and problems on the equations \eqref{eqn3} and 
\eqref{eqn4}.

We give the proof of Theorems \ref{thmF} and \ref{thmC} in Section 3.  Throughout the proof, 
we use $\omega(n), P(n), \mu(n)$ and $\varphi(n)$ with the regular meaning as 
being the number of distinct prime factors of $n$, the largest prime factor of $n$, the 
M\"obius function of $n$ and the Euler phi function of $n$, respectively.   All the computations in this 
manuscript were carried out in SageMath.  

\section{Preliminaries}

Let $n_0$ be a positive integer. For an integer $\ell$, define
\begin{equation}\label{Mn}
M_{n_0}(\ell):=\log \left( \prod_{\substack{p^{\nu_p}\|\ell \\ 
p\equiv \pm 1\pmod {n_0}}} p^{\nu_p}\right)=\sum_{\substack{p^{\nu_p}\|\ell \\ 
p\equiv \pm 1\pmod {n_0}}} \nu_p\log p.
\end{equation}
We prove a number of results to estimate lower and upper bounds for $M_{n_0}(U_n)$ and 
$M_{n_0}(V_n)$ for some divisors $n_0$ of $n$.  

To recall the terminology, we take coprime nonzero integers $r, s$ with $r^2+4s\neq 0$ and  let 
$\alpha$ and $\beta$ be the roots of the equation $\lambda^2-r\lambda-s=0$. For $n\geq 0$, we have 
\begin{align*}
U_n=\frac{\alpha^n-\beta^n}{\alpha-\beta} \quad {\rm and} \quad 
V_n= \alpha^n+\beta^n.
\end{align*}
We suppose that $\alpha/\beta$ is not a root of unity.  We 
assume without loss of generality that $|\alpha|\ge |\beta|$. Further, we may 
replace $(\alpha,\beta)$ by $(-\alpha,-\beta)$ if needed. This replacement changes the pair 
$(r,s)$ to $(-r,s)$, while $|U_n|$ and $|V_n|$ are not affected 
and hence the values of $M_{n_0}(|U_n|)$ and $M_{n_0}(|V_n|)$ for any divisor $n_0$ of $n$. 
Thus, we may assume that $r>0$. When $\alpha,\beta$ are real, these conventions imply that $\alpha$ is positive 
so $\alpha>|\beta|$.  Further, in this case $U_n>0$ and $V_n>0$ for all $n\ge 1$. 

We begin by proving a lower bound for $M_{n_0}(U_n)$ and $M_{n_0}(V_n)$ for some divisors $n_0$ of $n$. 
Throughout the paper, we use $x:=\beta/\alpha$. 

\begin{lemma}\label{primn_0}
Let $n$ be a positive integer and $p<p_1$ be distinct primes and $t\geq 0, h>0, h_1>0$ be integers. 
Let $n_0\in \{p^h, p^hp^{h_1}_1\}$, $n_0>4, n_0\notin \{6, 12\}$ be such that $n_0p^t\mid n$.  
 Then 
\begin{align}\label{n_0U}
\begin{split}
 M_{n_0}(U_n)\geq 
\begin{cases}
\log \frac{|\al^n-\be^n|}{|\al^{\frac{n}{p^{t+1}}}-\be^{\frac{n}{p^{t+1}}}|}-\log p^{t+1}, &  n_0=p^{h};\\
\frac{|\al^n-\be^n|}{|\al^{\frac{n}{p^{t+1}}}-\be^{\frac{n}{p^{t+1}}}|}
\frac{|\al^{\frac{n}{p_1p^{t+1}}}-\be^{\frac{n}{p_1p^{t+1}}}|}{
|\al^{n/p_1}-\be^{n/p_1}}|, &  n_0=p^hp^{h_1}_1,\\
-\log P\left(\frac{n_0}{\gcd(n_0, 3)}\right)^{t+1} &
\end{cases} 
\end{split}
\end{align}
and for $n_0=p^h, p>2$, 
\begin{align}\label{n_0V}
\begin{split}
M_{n_0}(V_n) \geq \log \frac{|\al^n+\be^n|}{|\al^{\frac{n}{p^{t+1}}}+\be^{\frac{n}{p^{t+1}}}|}-\log p^{t+1}. 
\end{split}
\end{align}
\end{lemma}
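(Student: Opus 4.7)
The proof will rely on the cyclotomic factorization $\al^n - \be^n = \prod_{d \mid n} \Phi_d(\al, \be)$, where each $\Phi_d(\al,\be) \in \Z$ is the value of the homogenized $d$th cyclotomic polynomial at $(\al, \be)$. The key input is the Bilu--Hanrot--Voutier primitive divisor theorem: because we exclude $n_0 \le 4$ and $n_0 \in \{6, 12\}$, every prime $q$ dividing some $\Phi_d(\al,\be)$ with $d\mid n$ and $q \nmid d$ has rank of apparition in $\{U_n\}$ exactly equal to $d$, and therefore satisfies $q \equiv \pm 1 \pmod d$.

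For $n_0 = p^h$, I would form the telescoping ratio
\[
\frac{\al^n - \be^n}{\al^{n/p^{t+1}} - \be^{n/p^{t+1}}} \;=\; \prod_{\substack{d \mid n \\ \nu_p(d) \ge \nu_p(n)-t}} \Phi_d(\al,\be),
\]
and note that the hypothesis $n_0 p^t \mid n$ forces every $d$ in the product to satisfy $\nu_p(d) \ge h$, i.e., $n_0 \mid d$. Any prime $q$ dividing one of these $\Phi_d(\al,\be)$ with $q \nmid d$ is then $\equiv \pm 1 \pmod{n_0}$ and contributes to $M_{n_0}(U_n)$. The remaining ``intrinsic'' contribution comes from $q = p$, the only prime that can simultaneously divide $d$ and $\Phi_d(\al,\be)$ for $d$ in the product; standard lifting-the-exponent estimates bound this total log-contribution by $\log p^{t+1}$, yielding the first case.

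For $n_0 = p^h p_1^{h_1}$, a two-prime inclusion--exclusion rewrites the stated double ratio as $\prod_{d \in \cD'} |\Phi_d(\al,\be)|$, where $\cD' = \{d \mid n : \nu_p(d) \ge \nu_p(n)-t,\ \nu_{p_1}(d) = \nu_{p_1}(n)\}$. Since $\nu_{p_1}(n) \ge h_1$, every $d \in \cD'$ is divisible by $n_0$, and BHV again delivers $q \equiv \pm 1 \pmod{n_0}$ for non-intrinsic primes; the intrinsic contribution of $p$ and $p_1$ is absorbed into the $\log P(n_0/\gcd(n_0,3))^{t+1}$ correction, where the $\gcd(n_0,3)$ accommodates the $d = 6$ exception of BHV in case $3 \mid n_0$. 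The $V_n$ bound is obtained by the same scheme, starting from $\al^n + \be^n = \prod_{d \mid 2n,\ \nu_2(d) = \nu_2(n)+1} \Phi_d(\al,\be)$: since $p > 2$ we have $\nu_2(n/p^{t+1}) = \nu_2(n)$, so $V_n/V_{n/p^{t+1}}$ is again a product of $\Phi_d(\al,\be)$ over $d$ with $n_0 \mid d$, and the argument proceeds verbatim. The main technical obstacle is the clean accounting of intrinsic primes $q \mid \gcd(d,\Phi_d(\al,\be))$, which is what produces the subtractive correction terms; the exclusions $n_0 \le 4$ and $n_0 \in \{6, 12\}$ are precisely the BHV exceptions, without which the clean congruence $q \equiv \pm 1 \pmod d$ could fail.
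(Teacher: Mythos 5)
Your route is genuinely different from the paper's: you work directly with the cyclotomic factorization of $\al^n-\be^n$ at the original pair $(\al,\be)$, whereas the paper first rescales to the auxiliary Lucas pair $\al_1=\al^{n/m}$, $\be_1=\be^{n/m}$ with $m=n_0p^t$ and then applies the BHV formula only to the $t+1$ indices $\ell=n_0p^i$, $0\le i\le t$. That rescaling is precisely what keeps the intrinsic-prime bookkeeping trivial: each of the $t+1$ terms $\Phi_{n_0p^i}(\al_1,\be_1)$ carries a single correction $P'(n_0p^i)=P'(n_0)$, so the total correction is $P'(n_0)^{t+1}$ with no case analysis.

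Your argument as written has a gap at the intrinsic-prime step. The claim that $p$ is ``the only prime that can simultaneously divide $d$ and $\Phi_d(\al,\be)$ for $d$ in the product'' is false. By the standard classification (Carmichael/Stewart, refined in BHV), a prime $q$ with $q\mid d$ and $q\mid\Phi_d(\al,\be)$ must be $q=P(d)$, with $z(q)=d/q^{\nu_q(d)}$; for $d$ in your telescoping set that is not a pure $p$-power times a small number, $P(d)$ is typically a prime different from $p$. Your proposal simply ignores these primes, so as stated the claimed bound $M_{n_0}(U_n)\ge\log\frac{|\al^n-\be^n|}{|\al^{n/p^{t+1}}-\be^{n/p^{t+1}}|}-\log p^{t+1}$ does not follow. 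The argument can be salvaged, but it requires the additional (and non-obvious) observation that you never state: if $q=P(d)\ne p$ is such an intrinsic prime, then since $q\ne p$ one has $\nu_p(z(q))=\nu_p(d)\ge h$ (and likewise $\nu_{p_1}(z(q))\ge h_1$ in the two-prime case), so $n_0\mid z(q)$, and together with $z(q)\mid q\pm1$ this forces $q\equiv\pm1\pmod{n_0}$; hence these intrinsic primes actually contribute to $M_{n_0}(U_n)$ and need no subtraction, leaving only $p$ (and possibly $p_1$) to correct for, with total $\le\log p^{t+1}$ by the rank-of-apparition/LTE count. Without that observation the proof does not close; the paper's change of base to $(\al_1,\be_1)$ is exactly the device that makes the issue disappear, at the cost of having to check that the parameters $(r_1,s_1)=(V_{n/m},(-1)^{n/m-1}s^{n/m})$ are again a coprime Lucas pair so that BHV applies to $\{U^1_\ell\}$.
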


\begin{proof}
Let $n_0$ be the divisor of $n$ given in the statement of the lemma. Let  $m=n_0p^t$. Write
$$U_n=\frac{\al^n-\be^n}{\al-\be}=\left(\frac{(\al^{n/m})^{m}-(\be^{n/m})^{m}}
{\al^{n/m}-\be^{n/m}}\right)\left( \frac{\al^{n/m}-\be^{n/m}}{\al-\be}\right).
$$
Let  $\al_1:=\al^{n/m}$ and $\be_1:=\be^{n/m}$ and put 
$$
U^{1}_{\ell}=\frac{\al^{\ell}_1-\be^{\ell}_1}{\al_1-\be_1}  \quad {\rm and} \quad 
V^{1}_{\ell}=\al^{\ell}_1+\be^{\ell}_1 \qquad {\rm for} \quad \ell\geq 1.
$$ 
Then $\{U^{1}_{\ell}\}_{\ell\geq 0}$ and$\{V^{1}_{\ell}\}_{\ell\geq 0}$  are the Lucas sequences 
with parameters $(r_1, s_1)$, where $(r_1, s_1)=(\al_1+\be_1, -\al_1\be_1)=(V_{n/m}, (-1)^{n/m-1}s^{n/m})$.  Further, we have 
 $U_n=U^1_{m}U_{n/m}$ and $V_n=V^1_{m}$ implying 
 $$M_{n_0}(U_n)\geq M_{n_0}(U^1_{m}) \qquad {\rm and} \qquad  M_{n_0}(V_n)\geq M_{n_0}(V^1_{m}).$$ 
Observe that $U^1_{m}=U^1_{n_0p^t}$ is divisible by each 
$U^1_{n_0p^i}, 0\leq i\leq t$. Recall that a prime $q\mid U^1_{\ell}$ is a 
primitive divisor of $U^1_{\ell}$ if $q\nmid U^1_{\ell'}$ for $\ell'<\ell$ and $q\nmid r^2_1+4s_1$. 
Also the primitive divisors of $U^1_{\ell}$ are all congruent to one of $ \pm 1$ modulo $\ell$. Hence, 
the primitive divisors of  $U^1_{n_0p^i}$ for $0\leq i\leq t$ are all congruent to one of 
$\pm 1$ modulo $n_0$. We now look at the primitive part of $U^1_{\ell}$. This is the part of 
$U^1_{\ell}$  built up only with powers of primitive prime divisors of $U^{1}_{\ell}$.  Thus, 
the primitive parts of $U^1_{n_0p^i}$ for $0\leq i\leq t$ divide $U^1_{m}$. Hence, 
$$M_{n_0}(U_n)\geq M_{n_0}(U^1_{m})\geq M_{n_0}\left(\prod^t_{i=0}U^1_{n_0p^i}\right).$$

For a positive integer $\ell$, let 
\begin{align*}
\Phi_\ell(\alpha_1, \beta_1):=\prod_{\substack{1\le k\le \ell \\ (k, \ell)=1}} (\alpha_1-e^{2\pi i k/\ell} \beta_1)
\end{align*}
be the specialisation of the homogenization $\Phi_\ell(X,Y)$ of the $\ell$-th cyclotomic polynomial $\Phi_\ell(X)$ in 
the pair $(\alpha_1, \beta_1)$. Further, it is well-known (see, for example,  \cite[Theorem 2.4]{BHV}), 
%\cite[Lemma 6]{stewart} also
that for $\ell>4, \ell\notin \{6, 12\}$, 
$$
\prod_{\substack{p^{\nu_p}\| U^1_{\ell}\\ p~{\rm primitive}}} p^{\nu_p}=\frac{\Phi_{\ell}(\alpha_1,\beta_1)}{P'(\ell)},
$$
where $P'(\ell):=P(\frac{\ell}{\gcd(\ell, 3)})$. Since primitive 
divisors of $U^1_{\ell}$ are congruent to one of $\pm 1$ modulo $\ell$, we obtain by taking 
$\ell=n_0p^i$ for $0\leq i\leq t$ that 
\begin{align}\label{MUn}
\begin{split}
M_{n_0}(U_n)\geq M_{n_0}\left(\prod^t_{i=0}U^1_{n_0p^i}\right)\geq 
\left(\prod^t_{i=0}|\Phi_{n_0p^i}(\alpha_1, \beta_1)|\right)(P'(n_0))^{-t-1}.
\end{split}
\end{align}
Also from the fact that $V_n=V^1_{n_0p^t}$ is divisible by each $V_{n_0p^i}, 0\leq i\leq t$ (here $n_0, p$ are both odd) 
and the primitive part of $V_{n_0p^i}$ is exactly the primitive part of $U^1_{2n_0p^i}$, we obtain  similarly 
\begin{align}\label{MVn}
M_{n_0}(V_n)\geq 
M_{2n_0}\left(\prod^t_{i=0}U^1_{2n_0p^i}\right)\geq \left(\prod^t_{i=0}|\Phi_{2n_0p^i}(\alpha_1, \beta_1)|\right)
(P'(n_0))^{-t-1}.
\end{align}
Therefore, it remains to estimate the right--hand sides of inequalities \eqref{MUn} and \eqref{MVn}.  

It is well-known that for a positive integer $\ell$, 
\begin{align*}
\Phi_\ell(\alpha_1, \beta_1) = \prod_{d\mid \ell} (\alpha^{\frac{\ell}{d}}_1-\beta^{\frac{\ell}{d}}_1)^{\mu(d)}.
\end{align*}
Hence, we have, by using $\al^{n_0p^t}_1=\al^n$,
\begin{align}\label{MU1}
\begin{split}
\prod^t_{i=0}\Phi_{n_0p^i}(\alpha_1, \beta_1)=&\prod^t_{i=0}\frac{\al^{p^{h+i}}_1-\be^{p^{h+i}}_1}
{\al^{p^{h+i-1}}_1-\be^{p^{h+i-1}}_1}
=\frac{\al^{p^{h+t}}_1-\be^{p^{h+t}}_1}{\al^{p^{h-1}}_1-\be^{p^{h-1}}_1}\\
=&\frac{\al^n-\be^n}{\al^{n/p^{t+1}}-\be^{n/p^{t+1}}}, \quad n_0=p^h;
\end{split}
\end{align}
and 
\begin{align}\label{MU2}
\begin{split}
\prod^t_{i=0}\Phi_{n_0p^i}(\alpha_1,\beta_1)=&\prod^t_{i=0}\frac{(\al^{p^{h+i}p^{h_1}_1}_1-\be^{p^{h+i}p^{h_1}_1}_1)
(\al^{p^{h+i-1}p^{h_1-1}_1}_1-\be^{p^{h+i-1}p^{h_1-1}_1}_1)}
{(\al^{p^{h+i-1}p^{h_1}_1}_1-\be^{p^{h+i-1}p^{h_1}_1}_1)(\al^{p^{h+i}p^{h_1-1}_1}_1-\be^{p^{h+i}p^{h_1-1}_1}_1)}\\
=&\frac{(\al^{p^{h+t}p^{h_1}_1}_1-\be^{p^{h+t}p^{h_1}_1}_1)(\al^{p^{h-1}p^{h_1-1}_1}_1-\be^{p^{h-1}p^{h_1-1}_1}_1)}{
(\al^{p^{h-1}p^{h_1}_1}_1-\be^{p^{h-1}p^{h_1}_1}_1)(\al^{p^{h+t}p^{h_1-1}_1}_1-\be^{p^{h+t}p^{h_1-1}_1}_1)} \\
=&\left(\frac{\al^n-\be^n}{\al^{\frac{n}{p^{t+1}}}-\be^{\frac{n}{p^{t+1}}}}\right)\left(
\frac{\al^{\frac{n}{p_1p^{t+1}}}-\be^{\frac{n}{p_1p^{t+1}}}}{\al^{n/p_1}-\be^{n/p_1}}\right),
 \quad  n_0=p^hp^{h_1}_1. 
\end{split}
\end{align}
Also,
\begin{align}\label{MV1}
\prod^t_{i=0}\Phi_{2n_0p^i}(\alpha_1,\beta_1)
=\frac{\al^{p^{h+t}}_1+\be^{p^{h+t}}_1}{\al^{p^{h-1}}_1+\be^{p^{h-1}}_1}
=\frac{\al^n+\be^n}{\al^{\frac{n}{p^{t+1}}}+\be^{\frac{n}{p^{t+1}}}},\quad n_0=p^h.
\end{align}

From $|\al|\geq |\be|$, we have $|x|\leq 1$. Taking out the powers of $\al$ in 
\eqref{MU1}--\eqref{MV1} and further using in \eqref{MU2} the inequality
\begin{align*}
\left|\frac{1-y}{1-y^{p^{t+1}}}\right|\geq \frac{1}{p^{t+1}} \quad {\text{\rm valid~for~all}}\quad p, \quad {\text{\rm {\rm where}}} \quad 
y:=x^{\frac{n}{p_1p^{t+1}}}\quad {\text{\rm has}}\quad |y|\leq 1, 
\end{align*}
we get the assertions \eqref{n_0U} and \eqref{n_0V} from \eqref{MUn} and \eqref{MVn}, respectively.  
\end{proof}

\begin{lemma}\label{x3/x}
Let $x\in \C$ with $|x|=1$. Then 
$$\left|\frac{1\pm x^3}{1\pm x}\right|=|1\pm x+x^2|\geq \frac{\sqrt{17}-3}{2}>0.56.$$
\end{lemma}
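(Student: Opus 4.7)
The plan is direct. The algebraic equality $\left|\frac{1 \mp x^3}{1 \mp x}\right| = |1 \pm x + x^2|$ follows instantly from the factorization $1 \mp x^3 = (1 \mp x)(1 \pm x + x^2)$, so the substance of the claim lies entirely in the lower bound for $|1 \pm x + x^2|$.

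For that lower bound, I would parametrize the unit circle by $x = e^{i\theta}$ with $\theta \in [0, 2\pi)$ and factor out $e^{i\theta}$:
$$1 \pm x + x^2 \;=\; e^{i\theta}\bigl(e^{-i\theta} \pm 1 + e^{i\theta}\bigr) \;=\; e^{i\theta}\bigl(\pm 1 + 2\cos\theta\bigr).$$
Taking moduli and writing $c = \cos\theta \in [-1,1]$, the claim reduces to the real-variable inequality $|1 \pm 2c| \geq \frac{\sqrt{17}-3}{2}$, which is a purely elementary statement independent of the imaginary part of $x$.

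The main obstacle is this last step. The bound $\frac{\sqrt{17}-3}{2}$ is the positive root of the quadratic $t^2 + 3t - 2 = 0$, which suggests that the extremal configuration in the admissible range of $c$ is governed by a quadratic relation between $1 + 2c$ and $1 - 2c$ rather than by either quantity in isolation (since $|1+2c|$ alone already vanishes at $c = -1/2$, corresponding to a primitive third root of unity, and similarly for $|1-2c|$ at $c=1/2$). I would therefore expect the proof to set up a joint extremization of the two quantities, reduce the extremal condition to a quadratic in $c$, and then verify via a discriminant computation (producing the $\sqrt{17}$) that the minimum is exactly $\frac{\sqrt{17}-3}{2}$. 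In short, the two opening steps are essentially bookkeeping, and all of the real content is packed into the one-variable optimization at the end.
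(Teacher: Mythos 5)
Your reduction is correct, and it is actually cleaner than what the paper does: factoring out $e^{i\theta}$ gives the genuine identity $|1\pm x+x^2|=|\pm 1+2\cos\theta|$, and you have also implicitly fixed a sign slip in the statement (since $(1-x^3)/(1-x)=1+x+x^2$ while $(1+x^3)/(1+x)=1-x+x^2$, the two occurrences of $\pm$ must be opposite, as you wrote them). Where you went astray is the very last step, and not in the way you feared. The lemma asserts a lower bound for each of $|1+x+x^2|$ and $|1-x+x^2|$ separately (one is used for $U_n$, the other for $V_n$); there is no joint quantity to be extremized, so the ``joint extremization'' you reached for is not available. Your own reduction therefore shows the statement cannot hold: $|1+2c|=0$ at $c=-1/2$, i.e.\ $1+x+x^2=0$ for $x=e^{2\pi i/3}$, so $|1+x+x^2|\geq(\sqrt{17}-3)/2$ is simply false for $|x|=1$. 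You produced a counterexample and then talked yourself out of it because you trusted the paper.

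In fact the paper's own argument breaks at precisely the point your reduction exposes. Taking $\theta\in(-\pi,\pi]$ so that $\cos(\theta/2)\geq 0$, the first reverse-triangle step $||1+x|-|x^2||=|2\cos(\theta/2)-1|$ is fine, but the next step claims $||1+x^2|-|x||\geq|2\cos\theta-1|$. Since $|1+x^2|=2|\cos\theta|$ and $|x|=1$, the left-hand side equals $|2|\cos\theta|-1|$, which is \emph{smaller} than $|2\cos\theta-1|$ whenever $\cos\theta<0$ (at $\theta=2\pi/3$ the two sides are $0$ and $2$); so the inequality reverses exactly in the regime that contains $c=-1/2$. The constant $(\sqrt{17}-3)/2$ is indeed, as you observed, the root of $t^2+3t-2=0$ coming from solving $3-(1+\ep)^2=\ep$, but that computation only constrains $|1+2\cos\theta|$ from below when $\cos\theta\geq 0$. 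A correct version of the lemma would need a hypothesis keeping $x$ away from the primitive cube roots of unity, and the downstream use in Lemma~\ref{23|n} (with $x=(\beta/\alpha)^{n/3}$, which need not avoid that neighborhood) inherits the same gap.
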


\begin{proof}
Put $\ep=\frac{\sqrt{17}-3}{2}$. First we prove that $|1+x+x^2|\geq \ep$. Write $x=e^{i\theta}$. 
We have 
$$|1+x|^2=(1+\cos \theta)^2+\sin^2 \theta=2(1+\cos \theta)=(2\cos \frac{\theta}{2})^2$$
and 
$$|1+x^2|^2=(1+\cos 2\theta)^2+\sin^2 2\theta=2(1+\cos 2\theta)=(2\cos \theta)^2.$$ 
Assume that $|1+x+x^2|<\ep$. Then 
$$\ep>|1+x+x^2|\geq ||1+x|-|x^2||\geq |2\cos \frac{\theta}{2}-1|$$
implying 
$$1-\ep<|2\cos \frac{\theta}{2}|<1+\ep.$$
Again
\begin{align*}
\ep>|1+x+x^2|&\geq ||1+x^2|-|x||\geq |2\cos \theta-1|=|2(2\cos^2 \frac{\theta}{2}-1)-1|\\
&\geq |3-(2\cos \frac{\theta}{2})^2|\geq 3-(1+\ep)^2=\ep.
\end{align*}
This is a contradiction. Hence $|1+x+x^2|\geq \ep$. Putting $y=-x$, we have $|y|=|-x|=1$ and 
hence $|1-x+x^2|=|1+y+y^2|\geq \ep$. 
\end{proof}

\begin{lemma}\label{23|n}
Let $\al, \be$ be complex conjugates. Assume that $U_n$ and $V_n$ has primitive 
divisors. Let $Q_n$ be the least prime congruent to $\pm 1$ modulo $n$. 
Let $p|n$ be prime with $p$ odd when we consider $\al^n+\be^n$. Then 
\begin{align}\label{p|n<}
|\al^n\pm \be^n|\geq \begin{cases}
|\al^{\frac{n}{p}}\pm \be^{\frac{n}{p}}||\al|^{\frac{n(p-1)}{p}} \quad &{\rm if} 
\quad |\al^{\frac{n}{p}}\pm \be^{\frac{n}{p}}|\leq \frac{2|\al|^{\frac{n}{p}}}{p}\\
\frac{2Q_n|\al|^{\frac{n}{p}}}{p} \quad &{\rm if} \quad |\al^{\frac{n}{p}}\pm \be^{\frac{n}{p}}|>
\frac{2|\al|^{\frac{n}{p}}}{p}. 
\end{cases}
\end{align}
Let $p\leq q$ be primes, $pq|n$ and further  $p, q$  are odd when we consider 
$\al^n+\be^n$.  Put $q_0=p^2, q$ according as $q=p$ or $q>p$, respectively. 
If $ |\al^{\frac{n}{p}}\pm \be^{\frac{n}{p}}|>\frac{2|\al|^{\frac{n}{p}}}{p}$, then 
\begin{align}\label{pq|n}
\frac{|\al^{n}\pm \be^{n}|}{|\al^{\frac{n}{q_0}}\pm \be^{\frac{n}{q_0}}|}&\geq 
\begin{cases}
\frac{Q_n}{p}|\al|^{\frac{n}{p}(1-\frac{1}{q})} & {\rm if} \ q_0=q\\
\frac{Q_n}{p}|\al|^{\frac{n}{p}(1-\frac{1}{p})} & {\rm if} \ q_0=p^2.
\end{cases}
\end{align}
Let $3|n$. Then 
\begin{align}\label{3q|n}
\begin{split}
&\frac{|\al^{n}\pm \be^{n}|}{|\al^{\frac{n}{3}}\pm \be^{\frac{n}{3}}|}\geq 0.56|\al|^{\frac{2n}{3}} \\
 {\rm and} \qquad &\frac{|\al^{n}\pm \be^{n}|}{|\al^{\frac{n}{p^l}}\pm \be^{\frac{n}{p^l}}|}\geq 
\frac{0.56Q_{\frac{n}{3}}}{3}|\al|^{\frac{2n}{3}(1-\frac{1}{p^l})} \quad {\rm if} \ 3p^l|n, p\neq 3
\end{split}
\end{align}
where $p$ is odd in case of $\al^n+\be^n$. 

\noindent
Let $n\geq 10$ be even and $|\al|\geq 3$. For $2p_0|n$ with $p_0=p^l, l\geq 1$, we have 
\begin{align}\label{2|n}
\frac{|\al^{n}-\be^{n}|}{|\al^{\frac{n}{p_0}}-\be^{\frac{n}{p_0}}|}\geq
 \begin{cases}
\frac{7}{8}\min(Q_{\frac{n}{2}}, n+1)|\al|^{\frac{n}{2}(1-\frac{1}{p_0})}  \quad & {\rm if} \quad p_0=p^l, p>2\\
\frac{7}{8}\min(Q_n, n+1)|\al|^{\frac{n}{2}-\frac{n}{2^l}} \quad &{\rm if} 
\quad p_0=2^l.
\end{cases} 
\end{align}
\end{lemma}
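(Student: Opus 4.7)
The plan is as follows. Since $\alpha,\beta$ are complex conjugates with $|\alpha|^2\ge|\alpha\beta|=|s|\ge 1$, one has $|\alpha|=|\beta|\ge 1$ and $x=\beta/\alpha$ lies on the unit circle, so that $|\alpha^n\pm\beta^n|=|\alpha|^n|1\pm x^n|$. The proofs throughout couple two ingredients: trigonometric estimates of the ratios $|1\pm y^k|/|1\pm y|$ for $|y|=1$, and the primitive-divisor principle that $(\alpha^n\mp\beta^n)/(\alpha^{n/k}\mp\beta^{n/k})=U_n/U_{n/k}$ (or $V_n/V_{n/k}$ for odd $k$) is a nonzero rational integer divisible by every primitive prime of $U_n$ (respectively $V_n$), hence of absolute value at least $Q_n$.

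For \eqref{p|n<} I split on whether $|1\pm x^{n/p}|\le 2/p$ or $>2/p$. In the first case, writing $y=x^{n/p}=e^{i\theta}$ (and reducing the $+$ sign with odd $p$ to the $-$ sign via $y\mapsto -y$), the condition $|1-y|=2|\sin(\theta/2)|\le 2/p$ forces $|\theta/2|\le\arcsin(1/p)\le\pi/(2p)$ and hence $|p\theta/2|\le\pi/2$; the chain $|\sin(p\theta/2)|\ge(2/\pi)|p\theta/2|=(2p/\pi)|\theta/2|\ge|\sin(\theta/2)|$ (valid since $p\ge 2>\pi/2$) then gives $|1\pm y^p|\ge|1\pm y|$, and multiplying through by $|\alpha|^{n-n/p}$ yields the first branch. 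In the second case, $|U_n/U_{n/p}|$ (or $|V_n/V_{n/p}|$) is a positive integer divisible by some primitive prime $\ge Q_n$, hence at least $Q_n$, and combining with the hypothesis $|\alpha^{n/p}\pm\beta^{n/p}|>2|\alpha|^{n/p}/p$ yields the second branch.

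For \eqref{pq|n} I decompose
\[
\frac{|\alpha^n\pm\beta^n|}{|\alpha^{n/q_0}\pm\beta^{n/q_0}|}=\frac{|\alpha^n\pm\beta^n|}{|\alpha^{n/p}\pm\beta^{n/p}|}\cdot\frac{|\alpha^{n/p}\pm\beta^{n/p}|}{|\alpha^{n/q_0}\pm\beta^{n/q_0}|},
\]
where the first factor is $\ge Q_n$ by primitive divisors and the second equals $|\alpha|^{n/p-n/q_0}|1\pm x^{n/p}|/|1\pm x^{n/q_0}|>|\alpha|^{n/p-n/q_0}/p$ via the hypothesis and the trivial bound $|1\pm x^{n/q_0}|\le 2$. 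When $q_0=p^2$ this directly produces the exponent $(n/p)(1-1/p)$. When $q_0=q>p$, the naive exponent $n/p-n/q$ falls short of the required $(n/p)(1-1/q)=n/p-n/(pq)$, and one must instead work with the cyclotomic factorization $U_n/U_{n/q}=\prod_{d\mid n/q}\Phi_{qd}(\alpha,\beta)$: isolating the factor $\Phi_{n/p}(\alpha,\beta)$ (bounded below using the hypothesis $|1\pm x^{n/p}|>2/p$) and $\Phi_n(\alpha,\beta)$ (bounded below by $Q_n$ via primitive divisors), while the remaining $\Phi_{qd}$ are nonzero integers contributing at least $1$, gathers the correct $|\alpha|$-exponent. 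For \eqref{3q|n}, Lemma \ref{x3/x} applied to $y=x^{n/3}$ gives $|1\pm y^3|/|1\pm y|=|1\mp y+y^2|\ge 0.56$, which multiplied by $|\alpha|^{2n/3}$ is the first inequality; the second inequality follows by the same template, using primitive divisors at level $n/3$ to produce the factor $Q_{n/3}/3$ and size estimates yielding $|\alpha|^{(2n/3)(1-1/p^l)}$.

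For \eqref{2|n}, with $n$ even and $|\alpha|\ge 3$, I split at the intermediate level $n/2$ (when $p>2$) or $n/2^l$ (when $p_0=2^l$) and apply the primitive-divisor bound, producing $Q_{n/2}$ or $Q_n$; the competing $n+1$ in the minimum reflects the cyclotomic-polynomial estimate on $|1\pm y^{p_0}|/|1\pm y|$ that applies outside the small-case regime of \eqref{p|n<}. The prefactor $7/8$ arises from a size estimate using $|\alpha|\ge 3$. The main obstacle is the $q_0=q>p$ case of \eqref{pq|n}: the straightforward multiplicative decomposition yields only the weaker exponent $n/p-n/q$, and upgrading to $(n/p)(1-1/q)$ demands the careful cyclotomic-factor analysis paralleling the proof of Lemma \ref{primn_0}, with analogous bookkeeping needed in \eqref{2|n} to choose correctly between $Q_\bullet$ and $n+1$.
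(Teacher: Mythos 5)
Your trigonometric proof of the first branch of \eqref{p|n<} is valid and genuinely different from the paper, which instead expands $\frac{\alpha^p\pm\beta^p}{\alpha\pm\beta}=p\beta^{p-1}+(\alpha\pm\beta)\sum_{i=0}^{p-2}(i+1)\alpha^{p-2-i}(\mp\beta)^i$ and applies the reverse triangle inequality to get $|\alpha^n\pm\beta^n|\ge|\alpha^{n/p}\pm\beta^{n/p}|\,|\alpha|^{n(p-1)/p}$ under the smallness hypothesis; your argument via $|\sin(p\theta/2)|\ge(2/\pi)p|\theta/2|\ge|\sin(\theta/2)|$ reaches the same conclusion more analytically. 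The second branch of \eqref{p|n<}, the $q_0=p^2$ case of \eqref{pq|n}, and the first part of \eqref{3q|n} agree with the paper.

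The case $q_0=q>p$ of \eqref{pq|n}, however, has a genuine gap which your suggested repair does not close. The factor $\Phi_{n/p}(\alpha,\beta)$ in the factorization of $U_n/U_{n/q}$ cannot be bounded below from the hypothesis $|\alpha^{n/p}\pm\beta^{n/p}|>2|\alpha|^{n/p}/p$: that quantity is $|\alpha-\beta|\prod_{1<d\mid n/p}|\Phi_d(\alpha,\beta)|$, and its size is carried mostly by factors \emph{other} than $\Phi_{n/p}$, which you then truncate to $1$ — precisely discarding the $|\alpha|$-power needed to upgrade $n/p-n/q$ to $n/p-n/(pq)$. The missing ingredient is the evaluation of a single bivariate cyclotomic polynomial at the shifted pair: $\Phi_{pq}(\alpha^{n/pq},\beta^{n/pq})=\frac{(\alpha^n-\beta^n)(\alpha^{n/pq}-\beta^{n/pq})}{(\alpha^{n/p}-\beta^{n/p})(\alpha^{n/q}-\beta^{n/q})}$ (with the $\Phi_{2pq}$ analogue for $V_n$) is one rational integer divisible by every primitive prime of $U_n$, hence $\ge Q_n$ in absolute value, which yields $\frac{|\alpha^n\pm\beta^n|}{|\alpha^{n/q}\pm\beta^{n/q}|}\ge Q_n\,\frac{|\alpha^{n/p}\pm\beta^{n/p}|}{|\alpha^{n/pq}\pm\beta^{n/pq}|}$; inserting $|\alpha^{n/p}\pm\beta^{n/p}|>2|\alpha|^{n/p}/p$ and $|\alpha^{n/pq}\pm\beta^{n/pq}|\le2|\alpha|^{n/pq}$ then gives the full exponent $(n/p)(1-1/q)$. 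Your outline of \eqref{2|n} is also too vague to check: the paper's $7/8$ and the $\min(Q_\bullet,n+1)$ do not come from a cyclotomic estimate ``outside the small-case regime'' but from the dichotomy generated by $2|\alpha|^{n/2}\le|\alpha^{n/2}-\beta^{n/2}|+|\alpha^{n/2}+\beta^{n/2}|$ — either both halves are $\ge\frac14|\alpha|^{n/2}$, whence $|\alpha^n-\beta^n|\ge\frac1{16}|\alpha|^n\ge\frac{7(n+1)}{8}|\alpha|^{n/2}$ using $n\ge10$ and $|\alpha|\ge3$, or one is $\ge\frac74|\alpha|^{n/2}$, and then one factors $\alpha^n-\beta^n$ through $\alpha^{n/2}\mp\beta^{n/2}$ and applies the primitive-divisor bound to $U_{n/2}/U_{n/(2p_0)}$ or $V_{n/2}/V_{n/(2p_0)}$ — none of which appears in your sketch.
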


\begin{proof}
Let $p\neq q$ be primes dividing $n$. Then  
\begin{align*}
\Phi_{p}(\alpha^{\frac{n}{p}}, \beta^{\frac{n}{p}}) = \frac{\al^n-\be^n}{\al^{\frac{n}{p}}-\be^{\frac{n}{p}}} \quad 
{\rm and} \quad 
\Phi_{pq}(\alpha^{\frac{n}{pq}}, \beta^{\frac{n}{pq}}) = \frac{(\al^n-\be^n)(\al^{\frac{n}{pq}}-\be^{\frac{n}{pq}})}
{(\al^{\frac{n}{p}}-\be^{\frac{n}{p}})(\al^{\frac{n}{q}}-\be^{\frac{n}{q}})} 
\end{align*}
are integers divisible by primitive divisors of $U_n$.  Further for odd $p, q$ with $pq|n$, we see that 
\begin{align*}
\Phi_{2p}(\alpha^{\frac{n}{p}}, \beta^{\frac{n}{p}}) = \frac{\al^n+\be^n}{\al^{\frac{n}{p}}+\be^{\frac{n}{p}}}
 \quad {\rm and} \quad 
\Phi_{2pq}(\alpha^{\frac{n}{pq}}, \beta^{\frac{n}{pq}}) = \frac{(\al^n+\be^n)(\al^{\frac{n}{pq}}+\be^{\frac{n}{pq}})}
{(\al^{\frac{n}{p}}+\be^{\frac{n}{p}})(\al^{\frac{n}{q}}+\be^{\frac{n}{q}})} 
\end{align*}
are integers divisible by primitive divisors of $V_n$. Since primitive divisors of $U_n$ and 
$V_n$ are at least $Q_n$, we obtain 
\begin{align}\label{pqQ}
\frac{|\al^{n}\pm \be^{n}|}{|\al^{\frac{n}{q}}\pm \be^{\frac{n}{q}}|}\geq 
\frac{Q_n|\al^{\frac{n}{p}}\pm \be^{\frac{n}{p}}|}{|\al^{\frac{n}{pq}}\pm \be^{\frac{n}{pq}}|}
\end{align}
where as before, $p, q$ are odd when we consider $\al^n+\be^n$.  We have 
\begin{align*}
\frac{\al^p\pm \be^p}{\al\pm \be}&=\sum^{p-1}_{i=0}\al^{p-1-i}(\mp \be)^i
=p\be^{p-1}+\sum^{p-2}_{i=0}(i+1)(\al^{p-1-i}(\mp \be)^i-\al^{p-2-i}(\mp \be)^{i+1})\\
&=p\be^{p-1}+(\al \pm \be)\sum^{p-2}_{i=0}(i+1)\al^{p-2-i}(\mp \be)^i
\end{align*}
and 
\begin{align*}
\frac{\al^p\pm \be^p}{\al\pm \be}&=\sum^{p-1}_{i=0}\al^{p-1-i}(\mp \be)^i
=\be^{p-1}+\sum^{\frac{p-1}{2}-1}_{i=0}(\al^{p-1-2i}(\mp \be)^{2i}+\al^{p-2-2i}(\mp \be)^{2i+1})\\
&=\be^{p-1}+(\al\mp \be)\sum^{\frac{p-1}{2}-1}_{i=0}\al^{p-2-2i}\be^{2i}. 
\end{align*}

In general, let $p|n$ and further $p$ is odd in case of $\al^n+\be^n$.  Assume 
that $|\al^{n/p}\pm \be^{n/p}|\leq \frac{2|\al|^{n/p}}{p}$. Then 
\begin{align*}
\big{|}\frac{\al^n\pm \be^n}{\al^{n/p}\pm \be^{n/p}}\big{|}&=
|p\be^{\frac{n(p-1)}{p}}+(\al^{n/p} \pm \be^{n/p})\sum^{p-2}_{i=0}(i+1)\al^{(p-2-i)n/p}(\mp \be^{n/p})^i|\\
&\geq p|\al|^{\frac{n(p-1)}{p}}-\frac{2|\al|^{n/p}}{p}|\al|^{n(p-2)/p}\sum^{p-2}_{i=0}(i+1)=|\al|^{n(p-1)/p}.
\end{align*}
Hence $|\al^n\pm \be^n|\geq |\al^{n/p}\pm \be^{n/p}||\al|^{n(p-1)/p}|\geq \al|^{n(p-1)/p}$ giving 
the first inequality of \eqref{p|n<}. Let $|\al^{n/p}\pm \be^{n/p}|>\frac{2|\al|^{n/p}}{p}$. Then 
\eqref{p|n<} follows from the fact that $\Phi_p(\al^{n/p}, \be^{n/p})$ and $\Phi_{2p}(\al^{n/p}, \be^{n/p})$ 
are divisible by primitive divisors of $U_n$ and $V_n$, respectively. 

Let $pq|n$ with $q\geq p$. Then \eqref{pq|n}  follows from \eqref{p|n<} by using 
$|\al^{n/q_0}+\be^{n/q_0}|\leq 2|\al|^{n/q_0}$ and \eqref{pqQ}. 

Let $3|n$. Then the first assertion of \eqref{3q|n} follows from Lemma \ref{x3/x}. The second assertion 
follows since 
\begin{align*}
\left|\frac{\al^n\pm \be^n}{\al^{\frac{n}{p^l}}\pm \be^{\frac{n}{p^l}}}\right|=
\left|\frac{\al^n\pm \be^n}{\al^{\frac{n}{3}}\pm \be^{\frac{n}{3}}}
\frac{\al^{\frac{n}{3}}\pm \be^{\frac{n}{3}}}{\al^{\frac{n}{3p^l}}\pm \be^{\frac{n}{3p^l}}}
\frac{\al^{\frac{n}{3p^l}}\pm \be^{\frac{n}{3p^l}}}{\al^{\frac{n}{p^l}}\pm \be^{\frac{n}{p^l}}}\right|
\geq 0.56|\al|^{\frac{2n}{3}}Q_{\frac{n}{3}}\frac{1}{3|\al|^{\frac{2n}{3p^l}}}.
\end{align*}

Let $2|n$ with $n\geq 10$ and $|\al|\geq 3$. From the triangle inequality 
$$
2|\al^{n/2}|\leq |\al^{n/2}-\be^{n/2}|+|\al^{n/2}+\beta^{n/2}|,
$$  
we have either 
$$|\al^{n/2}\pm \be^{n/2}|\geq  \frac{7}{4}|\al|^{\frac{n}{2}} \quad {\rm and} \quad  
|\al^{n/2}\mp \be^{n/2}|\leq  \frac{1}{4}|\al|^{\frac{n}{2}}$$
or $\min(|\al^{n/2}-\be^{n/2}|, |\al^{n/2}+\be^{n/2}|)\geq \frac{1}{4}|\al|^{\frac{n}{2}}$. In the latter case, 
we have  $$|\al^n-\be^n|\geq \frac{1}{16}|\al|^n\geq  \frac{7(n+1)}{8}|\al|^{n/2}$$ 
for $n\geq 10$ and $|\al|\geq 3$ and the assertion \eqref{2|n} follows by using $|\al^t\pm \be^t|\leq 2|\al|^t$ 
for $t\geq 1$. Consider the former case.  Then for $p_0=p^l, l\geq 1$ and further $l>1$ if $p=2$, we have 
\begin{align*}
 &\frac{|\al^{n}-\be^{n}|}{|\al^{p_0}-\be^{p_0}|}=
 \frac{|\al^{n/2}-\be^{n/2}||\al^{n/2}+\be^{n/2}|}{|\al^{\frac{n}{p_0}}-\be^{\frac{n}{p_0}}|}\\
\geq &\begin{cases}
\frac{7}{4}\frac{|\al|^{\frac{n}{2}}|\al^{\frac{n}{2}}+\be^{\frac{n}{2}}|}{2|\al|^{\frac{n}{2^l}}}
=\frac{7|\al|^{\frac{n}{2}-\frac{n}{2^l}}|V_{\frac{n}{2}}|}{8}\geq \frac{7Q_n|\al|^{\frac{n}{2}-\frac{n}{2^l}}}{8}
&|\al^{\frac{n}{2}}-\be^{\frac{n}{2}}|\geq \frac{7}{4}|\al|^{\frac{n}{2}},\\
& p_0=2^l;\\
\frac{7}{4}\frac{|\al|^{\frac{n}{2}}|\al^{\frac{n}{2}}-\be^{\frac{n}{2}}|}{|\al^{\frac{n}{2^l}}-\be^{\frac{n}{2^l}}|}
=\frac{7|\al|^{\frac{n}{2}}|U_{\frac{n}{2}}|}{4|U_{\frac{n}{2^l}}|}\geq \frac{7}{4}Q_{\frac{n}{2}}|\al|^{\frac{n}{2}}, 
%\geq 7Q_{\frac{n}{2}}|\al|^{\frac{n}{2}-\frac{n}{2^l}},
&|\al^{\frac{n}{2}}+\be^{\frac{n}{2}}|\geq \frac{7}{4}|\al|^{\frac{n}{2}},\\
& p_0=2^l;\\
\frac{7}{4}\frac{|\al|^{\frac{n}{2}}|\al^{\frac{n}{2}}+\be^{\frac{n}{2}}|}{2|\al|^{\frac{n}{2p_0}}|\al^{\frac{n}{2p_0}}+
\be^{\frac{n}{2p_0}}|}=\frac{7|\al|^{\frac{n}{2}-\frac{n}{2p_0}}|V_{\frac{n}{2}}|}{8|V_{\frac{n}{2p_0}}|}\geq 
\frac{7Q_n|\al|^{\frac{n}{2}-\frac{n}{2p_0}}}{8}, 
& |\al^{\frac{n}{2}}-\be^{\frac{n}{2}}|\geq \frac{7}{4}|\al|^{\frac{n}{2}}, \\
&p>2;\\
\frac{7}{4}\frac{|\al|^{\frac{n}{2}}|\al^{\frac{n}{2}}-\be^{\frac{n}{2}}|}{2|\al|^{\frac{n}{2p_0}}
|\al^{\frac{n}{2p_0}}-\be^{\frac{n}{2p_0}}|}=\frac{7|\al|^{\frac{n}{2}-\frac{n}{2p_0}}
|U_{\frac{n}{2}}|}{8|U_{\frac{n}{2p_0}}|}\geq 
\frac{7Q_{\frac{n}{2}}|\al|^{\frac{n}{2}-\frac{n}{2p_0}}}{8}, 
&|\al^{\frac{n}{2}}+\be^{\frac{n}{2}}|\geq \frac{7}{4}|\al|^{\frac{n}{2}}, \\
&p>2.
\end{cases}
\end{align*}
This gives the assertion \eqref{2|n} since $|\al|\geq 4$, and $Q_n\geq Q_{n/2}\geq n/2-1$.  
\end{proof}

\iffalse
\begin{lemma}\label{-n/2al}
Let $|\ep|>0$ be such that $|\al^3-\be^3|$
\begin{enumerate}[(i)]
\item Let $(\al^3+\be^3)^2=(1+\ep)|\al|^3$ and $(\al^3+\be^3)^2=-(3-\ep)|\al|^3$ with $0<|\ep|<|\al|^{22/19-6}$
\end{enumerate}

$n\ge 4$ and $q|n$ with $q=4$ or $q$ odd. Then 
\begin{equation}
\log |1\pm x^n| \ge  -\frac{n}{2}\left(1-\frac{1}{q}\right)\log |\alpha|+\log (1-\frac{1}{q-1})
\end{equation}
\end{lemma}
\fi

\begin{lemma}\label{p-fp}
Let $p$ be a prime and put 
$$f_p:=\frac{3.9}{p-1}+\frac{1.46\log 3p}{p}.$$
For $n\in \{p, 2p\}$, let $\log |\al|\geq 1.35(1-f_p)n$ and $\be/\al$ is not a root of unity. Then 
\begin{align}\label{1-fp}
\begin{split}
\log |\al^p\pm \be^p|&>\frac{\log |\al|+\log 2p}{1-f_p} \qquad {\rm for} \quad p>787\\ 
{\rm and} \quad  \log |\al^{2p}+\be^{2p}|&>\frac{2\log |\al|+\log 2p}{1-f_p} \ \quad {\rm for} \quad p>397.
\end{split}
\end{align} 
\end{lemma}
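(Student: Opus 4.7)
The plan is to pass to the ratio $x=\be/\al$ of modulus $\le 1$ and rewrite
$$\log|\al^n\pm\be^n| \;=\; n\log|\al| + \log|1\pm x^n|,$$
so that the question reduces to a lower bound for the correction term $\log|1\pm x^n|$. For the first assertion ($n=p$), the target inequality is equivalent, after multiplying by $1-f_p$ and collecting terms, to
$$\bigl[(1-f_p)p-1\bigr]\log|\al| + (1-f_p)\log|1\pm x^p| \;>\; \log 2p.$$
Under the hypothesis $\log|\al|\ge 1.35(1-f_p)p$, the first summand is at least $\bigl[(1-f_p)p-1\bigr]\cdot 1.35(1-f_p)p$, which is of order $1.35\,p^2$ and hence swamps $\log 2p$ once $p$ is moderately large. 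Everything rests on showing that $\log|1\pm x^p|$ is not too negative.

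If $\al,\be$ are real then $|x|<1$ strictly, so $|1\pm x^p|\ge 1-|x|^p$ is bounded away from $0$ and the bound is immediate. The substantive case is $\al,\be$ complex conjugates, where $|x|=1$; writing $x=e^{i\theta}$, we have $|1\pm x^p|$ equal to $2|\sin(p\theta/2)|$ or $2|\cos(p\theta/2)|$, so the task becomes a lower bound on the distance from $p\theta$ to $\pi\Z$. Because $x$ is not a root of unity by hypothesis, this distance is a linear form in two logarithms,
$$\Lambda \;=\; p\log x - k\pi i\qquad(k\in\Z\text{ nearest}),$$
and an explicit Laurent-type estimate applies. The height of $x=\be/\al$ enters: from $\al\be=-s$ and $\al+\be=r$ one finds that $x$ is a root of $sT^2+(\al^2+\be^2)T+s$, whose coefficients are bounded by $2|\al|^2$, so $h(x)\le \log|\al|+O(1)$. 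Feeding this height and the coefficient $p$ into an optimised two-logarithm bound yields an estimate of the shape
$$\log|1\pm x^p|^{-1} \;\le\; \bigl(c_1+c_2\log(3p)\bigr)\log|\al|,$$
with $(c_1,c_2)$ matching $(3.9,1.46)$ for a suitable choice of Laurent's parameters.

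Substituting this upper bound into the rearranged inequality above and invoking $\log|\al|\ge 1.35(1-f_p)p$ collapses the problem to a clean one-variable inequality in $p$, which one verifies holds precisely for $p>787$. The second assertion is handled by exactly the same method: only the $+$ sign occurs, the hypothesis $\log|\al|\ge 1.35(1-f_p)(2p) = 2.7(1-f_p)p$ provides twice the geometric growth, and the same Laurent estimate applied with coefficient $2p$ in $\Lambda$ yields the conclusion at the smaller threshold $p>397$.

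The main obstacle is the careful calibration of the two-logarithm estimate so that the exact numerical constants $1.35$, $3.9$, $1.46$ and the sharp thresholds $787$ and $397$ emerge, rather than weaker surrogates. One must verify in particular that the dependence on the coefficient enters as $\log(3p)$ rather than as the usual $\max(\log p,21)$, and that the height estimate $h(x)\le \log|\al|+O(1)$ is exploited tightly enough; a subtlety in the $n=2p$ case is that the doubled coefficient $2p$ in $\Lambda$ interacts with the doubled hypothesis on $\log|\al|$ in a way that genuinely lowers the prime threshold relative to the first case.
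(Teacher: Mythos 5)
Your high-level strategy coincides with the paper's: pass to $\gamma=\be/\al$ on the unit circle, recognize $|1\pm\gamma^n|$ as $|e^z-1|$ for a linear form $\Lambda=n\log\gamma-k\pi i$ in two logarithms, use the height $h(\gamma)=\log|\al|$ (which the paper computes exactly from the minimal polynomial $sT^2\mp(r^2+2s)T+s$), and apply an explicit lower bound for $|\Lambda|$. That is precisely the BHV/Mignotte machinery the paper invokes.

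However, there are two genuine gaps. First, you misattribute the constants $3.9$, $1.46$, and $\log 3p$: you claim they emerge from ``a suitable choice of Laurent's parameters,'' but in the paper $f_p$ is \emph{given} a priori, designed to match the coefficient $f(q)=\tfrac{3.9}{q-1}+\tfrac{1.46\log 3q}{q}$ in the upper bound for $M_{n_0}(\cD)$ from Lemma~\ref{MnBDC}. The lemma is engineered to interface with inequality~\eqref{al-Y}; $f_p$ is an input, not an output of the transcendence estimate. Second, and more consequentially, you claim the two-log estimate yields a bound of the form $-\log|1\pm\gamma^p|\le(c_1+c_2\log 3p)\log|\al|$, i.e.\ linear in $\log p$. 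The actual bound from \cite[Theorem A.1.3]{BHV} is $\log|\Lambda|>-c_m\log|\al|-\cdots$ where $c_m$ is built from $\mathscr{H}_0^2$ with $\mathscr{H}_0\sim\log m$, so the dependence is \emph{quadratic} in $\log m$. This quadratic growth is exactly what produces the nontrivial thresholds $p>787$ and $2p>397$ via the balance $\delta_m>c_m$ (roughly $m\gtrsim c(\log m)^2$); a linear-in-$\log p$ bound as you describe would give thresholds far smaller than $787$, which is a signal the claimed shape is wrong. You also overlook that the hypothesis $\log|\al|\ge 1.35(1-f_p)n=A_m$ is used twice in the paper: once inside the LFL machinery to bound $\mathscr{H}_0$ and $c_m$ (the quantity $A_m$ appears in denominators there), and once at the end to show the final inequality in $p$ increases and fails past the stated thresholds. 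Without the quadratic dependence and this dual role of the hypothesis, the numerical calibration to $787$ and $397$ cannot be reproduced.
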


\begin{proof}
We use linear forms in logarithms. Since $r^2+4s<0$, we have $|\al|=|\be|=\sqrt{|s|}$. 
Let $\gamma=\be/\alpha$. Then the minimal polynomial of $\pm \gamma$ is 
$sx^2\mp (r^2+2s)x+s$ and both $\pm \gamma$ and its complex conjugate $1/\pm \gamma$  
have absolute values $1$. Thus,
$$
h(\gamma)=h(-\gamma)=\frac{\log |s|}{2}=\log |\al|.
$$ 
For $n\in \{p, 2p\}$, put  $\delta_n=n(1-\frac{1}{p(1-f_p)})$ and $A_n=1.35(1-f_p)n$. 
Let $\log |\al|\geq A_n$ and $\be/\al$ is not a root of unity. Suppose \eqref{1-fp} does not hold. Then from 
$|\al^n\pm \be^n|=|\al|^n|1\pm \gamma^n|$, we get 
\begin{align*}
\log |1\pm \gamma^n|+\delta_n\log |\al|-\frac{\log 2p}{1-f_p}<0.
\end{align*}
Observe that $-1=e^{i\pi}$. Hence $|1\pm \gamma^n|=|e^{z}-1|$ with $z\in \{n\log \gamma, n\gamma+\pi i\}$. 
By maximum modulus principle, for a complex number $z$, we have either 
$|e^z-1|\geq \frac{1}{2}$ or $|e^z-1|>\frac{2}{\pi}|z-k'\pi i|$ for some integer $k'$. If 
$|e^z-1|\geq \frac{1}{2}$, then clearly 
$$\log |1\pm \gamma^n|+\delta_n\log |\al|-\frac{\log 2p}{1-f_p}>-\log 2+
\left(p-\frac{1}{1-f_p}\right)A_n-\frac{\log 2p}{1-f_p}>0$$
for $p\geq 397$. Thus $|e^z-1|>\frac{2}{\pi}|z-k\pi i|$ giving 
\begin{align}\label{tempo<}
\delta_n\log |\al|-\frac{\log 2p}{1-f_p}+\log \frac{2}{\pi}+\log |\Lambda|<0
\end{align}
where 
$$\Lambda=n\log \gamma-k\pi i, \quad k\in \Z, |k|\leq m.$$
We use \cite[Theorem A.1.3]{BHV} with $\al=\gamma$ and $\lambda=2.5$. We have $D=1, B=n$,  
$$\rho=e^\lambda, \quad t=\frac{1}{6\pi \rho}-\frac{1}{48\pi \rho(1+\frac{2\pi \rho}{3\lambda})}, \quad 
k=\left(\frac{1}{3}+\sqrt{\frac{1}{9}+2\lambda t}\right)$$
and 
\begin{align*}
\mathscr{H}\leq \mathscr{H}_0:=&\log m+\log\left(\frac{1}{\pi \rho}+\frac{1}{\rho \pi+2A_m}\right)-\log \sqrt{k}
+0.886\\
&+\frac{3\lambda}{2}+\frac{1}{3k}\left(\frac{1}{2\pi \rho}+\frac{1}{0.5\rho \pi+A_m}\right)+0.023, 
\end{align*}
by using \cite[(A.12)]{BHV} and $\log |\al|\geq A_m$, to obtain
\begin{align*}
\log |\Lambda|>-c_m\log |\al|-2\mathscr{H}-2\log \mathscr{H}+0.5\lambda+2\log \lambda-3\log 2
\end{align*}
where 
$$c_m=(8\pi k\rho\lambda^{-1}\mathscr{H}^2_0+0.023)\left(1+\frac{0.5\rho \pi}{A_m}\right).$$
This together with \eqref{tempo<} gives
\begin{align*}
0>(\delta_m-c_m)\log |\al|-\frac{\log 2p}{1-f_p}+\log \frac{2}{\pi}-2\mathscr{H}-2\log \mathscr{H}
+0.5\lambda+2\log \lambda-3\log 2.
\end{align*}
Finally we use $\log |\al|\geq A_m$ to see that the right hand side is an increasing function of $p$ 
and the inequality is not valid for $m=p>787$ and $m=2p>397$. Hence the proof. 
\end{proof}

The next lemma is on the upper bound for prime powers dividing a product of factorials and a product of 
middle binomial coefficients and Catalan numbers.

For positive integers $1<m_1\leq m_2\leq \cdots \leq m_k$, let 
\begin{align}\label{cF}
\mathcal{F}:=\mathcal{F}(m_1, m_2, \cdots, m_k):=m_1!m_2!\cdots m_k!. 
\end{align}
and 
\begin{align}\label{cD}
\mathcal{D}:=\mathcal{D}(m_1, m_2, \ldots, m_k):=\prod^k_{i=1}D_{m_i}, \quad D_{m_i}\in \{C_{m_i}, B_{m_i}\}. 
\end{align}
Let $n_0$ be a positive integer. Recall the definition of $M_{n_0}(\ell)$ given in \eqref{Mn}. The following 
results on the upper bounds for 
\begin{equation*}
M_{n_0}(\mathcal{F})=\log\left(\prod_{\substack{p^{\nu_p}\|\cF \\ 
p\equiv \pm 1\pmod {n_0}}} p^{\nu_p}\right)=
\sum _{\substack{p^{\alpha_p}\|\cF \\ p\equiv \pm 1\pmod {n_0}}}\nu_p\log p.
\end{equation*}
and 
\begin{equation*}
M_{n_0}(\mathcal{D}):=\log\left(\prod_{\substack{p^{\nu_p}\|\cD \\ 
p\equiv \pm 1\pmod {n_0}}} p^{\nu_p}\right)=
\sum _{\substack{p^{\alpha_p}\|\cD \\ p\equiv \pm 1\pmod {n_0}}}\nu_p\log p.
\end{equation*}
are given by \cite[Lemma 9]{LLS-F} and \cite[Lemma 8]{LLS-C}, respectively.  
 
\begin{lemma}\label{MnBDF}
$(a)$ For a positive integer $n_0\geq 30$ and $m_k\geq 3$, we have 
\begin{equation}\label{Mnub}
M_{n_0}(\cF)<\frac{4}{\varphi(n_0)}(1+\log\log n_0)\left(\log \cF -1.4\log m_k\right).
\end{equation}
\noindent 
$(b)$ Let $\mathcal{D}:=\{8, 9, 12, 16, 24, 25, 27\}\cup \{5\leq p\leq 30:  p \ { prime}\}$.  
For $n_0\in \cD$ and $m_k\geq 3$, we have 
 \begin{align}\label{Mnub<30}
M_{n_0}(\cF)\le \frac{2.2}{\varphi(n_0)}\left(\log \cF -1.4\log m_k\right) .
\end{align}
Further for $n_0\in \{8, 12\}$, we have 
 \begin{align}\label{Mnub812} 
M_{n_0}(\cF)\le \begin{cases}
0.23\log \cF & \quad {\rm when} \ n_0=8 \ {\rm and} \ 7\leq m_k<17;\\
0.3\log \cF & \quad {\rm when} \ n_0=8 \ {\rm and} \ 17\leq m_k\leq 47;\\
0.28\log \cF & \quad {\rm when} \ n_0=12 \ {\rm and} \ 7\leq m_k\leq 16.
\end{cases}
\end{align}
\end{lemma}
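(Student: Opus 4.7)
The plan is to combine Legendre's formula for $p$-adic valuations of factorials with uniform estimates for sums of $(\log p)/(p-1)$ over primes in the two residue classes $\pm 1 \pmod{n_0}$. Using $\nu_p(m!)\le (m-1)/(p-1)$ on each factorial gives
$$M_{n_0}(\cF)\le \sum_{i=1}^{k}(m_i-1)\,T_{n_0}(m_i),\quad\text{where}\quad T_{n_0}(x):=\sum_{\substack{p\le x\\ p\equiv\pm 1\pmod{n_0}}}\frac{\log p}{p-1}.$$
Since any prime $p\equiv\pm 1\pmod{n_0}$ satisfies $p\ge n_0-1$, the inner sum $T_{n_0}(m_i)$ is empty unless $m_i\ge n_0-1$, so only the largest factorials contribute.

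For part (a), I would invoke an explicit Brun--Titchmarsh inequality applied to each of the two residue classes $\pm 1\pmod{n_0}$ to obtain a bound of the shape
$$T_{n_0}(x)\le \frac{2(1+\log\log n_0)}{\varphi(n_0)}\log x\qquad\text{for } x\ge n_0-1.$$
Substituting this, and using Stirling's inequality $\log(m_i!)\ge m_i\log m_i-m_i$, gives $\sum_i(m_i-1)\log m_i \le \log\cF + \sum_i m_i$, and one absorbs $\sum_i m_i$ into $\log\cF$ (using that each contributing $m_i$ exceeds $n_0-1\ge 29$, so $m_i\le m_i\log m_i\le 2(m_i\log m_i-m_i)+O(1)$). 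This doubles the prefactor, accounting for the factor of $4$ in the announced bound. The subtracted $1.4\log m_k$ would be produced by isolating the top factorial $m_k!$, whose dominant contribution to $\log\cF$ exceeds any single term $\nu_p(m_k!)\log p$ by a margin recoverable by partial summation against the largest prime $p\le m_k$.

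For part (b), since the admissible set of $n_0$ is finite and small, the estimate can be carried out by direct enumeration: list the primes $p\equiv\pm 1\pmod{n_0}$ up to some threshold, compute $(\log p)/(p-1)$ exactly, and control the tail via a Chebyshev-type bound. The sharper constants for $n_0\in\{8,12\}$ with $m_k$ in one of the narrow ranges \eqref{Mnub812} should follow from a finite computer check, comparing $M_{n_0}(\cF)$ with $\log\cF$ across the finitely many possible top values. The main obstacle is matching the constants precisely: both the factor $4(1+\log\log n_0)$ of part (a) and the small constants $0.23,\,0.28,\,0.3$ of part (b) are sensitive to the particular version of Brun--Titchmarsh invoked and to how efficiently the Stirling and partial-summation error terms are absorbed, so some experimentation is likely needed to land on the stated form rather than a slightly weaker one.
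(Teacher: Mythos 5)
The paper does not prove this lemma: it is imported verbatim from \cite[Lemma~9]{LLS-F} (the sentence immediately preceding the statement says so explicitly), so there is no in-paper argument to compare your proposal against. Judged on its own, your sketch takes the natural route---Legendre's formula, a Mertens-type estimate for primes in the two progressions $\pm1\pmod{n_0}$, and Stirling---and the reduction $M_{n_0}(\cF)\le\sum_i(m_i-1)T_{n_0}(m_i)$, together with the observation that $T_{n_0}(m_i)=0$ unless $m_i\ge n_0-1$, is correct and certainly part of any proof. But two essential pieces are asserted rather than established.

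First, the bound $T_{n_0}(x)\le \frac{2(1+\log\log n_0)}{\varphi(n_0)}\log x$ for all $x\ge n_0-1$ does not drop out of Brun--Titchmarsh: that inequality bounds $\pi(x;q,a)$ by $2x/(\varphi(q)\log(x/q))$ and degenerates precisely when $x$ is comparable to $q=n_0$, which is the regime that controls your sum. Turning a prime-counting bound into a bound on $\sum_{p\le x}\log p/(p-1)$ with that specific constant, uniformly in $n_0$, needs a genuine explicit argument (partial summation from an effective $\theta(x;q,\pm1)$ estimate or a known explicit Mertens theorem for arithmetic progressions); the factor $1+\log\log n_0$ is exactly the delicate part and cannot be waved through. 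Second, the subtracted $1.4\log m_k$ is never actually produced. Your Stirling step already spends the factor of $2$ (which, together with the two residue classes, accounts for the $4$) in passing from $\sum_i(m_i-1)\log m_i$ to $2\log\cF$, and after that there is no visible slack of order $\log m_k$; the second-order Stirling correction $+\tfrac12\log(2\pi m_k)$ has the wrong sign relative to what you spent, and the vague appeal to ``partial summation against the largest prime'' does not identify a mechanism. Since the whole point of this lemma in Section~3 is to be compared numerically against the lower bounds from Lemma~\ref{primn_0}, a version with a larger prefactor or without the $-1.4\log m_k$ would not close those comparisons, so ``some experimentation'' is not an acceptable placeholder: the proof must actually produce these constants. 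Part~(b) is in principle a finite verification and your outline there is reasonable, but it too would need an explicit truncation threshold and a quantitative tail estimate before it is a proof rather than a plan.
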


\begin{lemma}\label{MnBDC}
For $n_0\geq 25$, we have 
\begin{equation}\label{Mnub}
M_{n_0}(\cD)\leq  \begin{cases}
\left(\frac{3.9}{\varphi(n_0)}+2.92\frac{\log 3n_0}{n_0}\right)(\log \cD-\log 2)
& \  {\rm if} \ n_0 \ {\rm is \ even};\\
\left(\frac{3.9}{\varphi(n_0)}+1.46\frac{\log 3n_0}{n_0}\right)(\log \cD-\log 2)  & \ 
{\rm if} \ n_0 \ {\rm is \ odd}.
\end{cases}
\end{equation}
Let $n_0\in \{9, 16, 24\}$ or $5\leq n_0\leq 23$ be a prime.  We have  
\begin{align}\label{Mnub<30}
M_{n_0}(\cD)\le \begin{cases}
 \frac{\delta_0}{\varphi(n_0)}\log \cD, & {\rm if} \quad m_k<1500;\\
\frac{\delta_0}{\varphi(n_0)}\left(\log \cD -\log 2\right), & {\rm if} \quad m_k\geq 1500; \\
\end{cases}
\end{align}
where $\delta_0$ is given by 
\begin{center}
\begin{tabular}{|c||c|c|c|c|c|c|c|} \hline
$n_0$ & $5$ & $7$  & $9$ & $16$ & $24$ & $11\leq n_0\leq 23$    \\ \hline \hline
$\delta_0$ & $2.61$ & $3.19$ & $3.57$ & $2.89$ & $2.746$ & $3.3$   \\ \hline 
\end{tabular}
\end{center}
\end{lemma}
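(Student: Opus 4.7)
The plan is to bound $M_{n_0}(\cD)$ prime-by-prime and then apply an explicit Brun--Titchmarsh-type estimate for the primes lying in the two residue classes $\pm 1\pmod{n_0}$, comparing against an unconditional Stirling-type lower bound for $\log\cD$.

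First, Kummer's theorem gives
$$\nu_p(B_m)=\sum_{j\ge 1}\Big(\lfloor 2m/p^j\rfloor-2\lfloor m/p^j\rfloor\Big)\le\lfloor\log(2m)/\log p\rfloor,$$
and since $B_m=(m+1)C_m$ we have $\nu_p(C_m)\le\nu_p(B_m)$ for every $p$. Hence
$$M_{n_0}(\cD)\le\sum_{i=1}^{k}\ \sum_{\substack{p\le 2m_i\\ p\equiv\pm 1\pmod{n_0}}}\nu_p(B_{m_i})\log p.$$
I would split the inner sum at $p=m_i$: for $m_i<p\le 2m_i$ one has $\nu_p(B_{m_i})\le 1$, so the contribution is at most $\theta(2m_i;n_0,\pm 1)-\theta(m_i;n_0,\pm 1)$; for $p\le m_i$ I use $\nu_p(B_{m_i})\log p\le\log 2m_i$, giving $\pi(m_i;n_0,\pm 1)\log 2m_i$.

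Second, I would insert an explicit Brun--Titchmarsh bound (Ramar\'e--Rumely, or Dusart's refinement) of the shape
$$\theta(x;n_0,\pm 1)\le\frac{2x}{\varphi(n_0)}+O\!\left(\frac{x\log 3n_0}{n_0}\right),$$
together with the standard lower bound $\log B_m\ge 2m\log 2-\tfrac12\log(\pi m)$, which for every factor $D_m\in\{B_m,C_m\}$ yields $\log D_m\ge\log D_m-\log 2$ with room to spare once $m_k$ is not too small. Matching the two sides and optimising the split at $p=m_i$ leads to
$$M_{n_0}(\cD)\le\left(\frac{3.9}{\varphi(n_0)}+c\,\frac{\log 3n_0}{n_0}\right)\bigl(\log\cD-\log 2\bigr)$$
for $n_0\ge 25$, with $c=1.46$ in general and $c=2.92$ when $n_0$ is even. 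The extra factor of $2$ in the even case is forced by $\varphi(n_0)\le n_0/2$, which amplifies the Brun--Titchmarsh remainder term.

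Third, for the tabulated small moduli $n_0\in\{5,7,9,11,13,16,17,19,23,24\}$, Brun--Titchmarsh is too weak to give the sharp $\delta_0$. Instead I would compute, for each such $n_0$, the exact contribution of every prime $p\equiv\pm 1\pmod{n_0}$ up to a moderate threshold and treat the tail by partial summation with Dusart-type bounds, thereby extracting the tabulated $\delta_0$. The dichotomy $m_k<1500$ versus $m_k\ge 1500$ in the statement reflects that the $-\log 2$ improvement only survives when $\log\cD$ is large enough to absorb the loss coming from the small-prime estimates.

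The main obstacle is obtaining the \emph{sharp} constants $3.9$, $2.92$, $1.46$ and the tabulated $\delta_0$ values. This demands the best available explicit Brun--Titchmarsh inequality together with careful bookkeeping of small primes and of primes ramifying in $n_0$. For the tabulated $n_0$ a direct finite verification up to $m\sim 1500$ is essentially unavoidable; for general $n_0\ge 25$ the constant $3.9$ is already close to the natural limit $2/\log 2\approx 2.885$ predicted by the asymptotic density of primes in the two residue classes, so there is little slack and the error analysis must be tight.
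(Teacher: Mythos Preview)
The paper does not prove this lemma at all: it is quoted verbatim as \cite[Lemma~8]{LLS-C}, so there is nothing in the present paper to compare your argument against. Your outline---Kummer's bound $\nu_p(B_m)\log p\le\log(2m)$, splitting the prime sum at $p=m_i$, inserting an explicit Brun--Titchmarsh inequality for the two classes $\pm1\pmod{n_0}$, and comparing against the Stirling lower bound $\log B_m\sim 2m\log 2$---is indeed the standard route to such inequalities and is almost certainly what is carried out in the cited reference.

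Two small points in your sketch are off. First, your explanation for the doubling $1.46\mapsto 2.92$ in the even case (``forced by $\varphi(n_0)\le n_0/2$'') is not the mechanism: the leading term $3.9/\varphi(n_0)$ already absorbs the size of $\varphi(n_0)$, so the secondary term must come from something else---typically, for odd $n_0$ one can pass to modulus $2n_0$ at no cost (odd primes $\equiv\pm1\pmod{n_0}$ lie in only two classes mod $2n_0$), which halves the Brun--Titchmarsh remainder relative to the even case. Second, your claimed ``natural limit $2/\log 2\approx 2.885$'' for the constant in front of $1/\varphi(n_0)$ is not right: by equidistribution the asymptotic ratio $M_{n_0}(B_m)/\log B_m$ is $2/\varphi(n_0)$, so the natural constant is $2$, and $3.9$ reflects the usual factor-of-two loss in Brun--Titchmarsh. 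Neither of these affects the viability of your plan, but they would need to be corrected in a full write-up.
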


\begin{lemma}\label{mk<N}
Write each $\cD=\prod^k_{i=1}D_{m_i}$ as 
\begin{align*}
\log \cD=\sum_jt_{1j}\log C_j+\sum_jt_{2j}\log B_j
\end{align*}
where 
$$
t_{1j}:=\#\{i: D_{m_i}=C_j\} \qquad  {\rm and} \qquad t_{2j}:=\#\{i: D_{m_i}=B_j\}.$$
Given $n_0$, let  
\begin{align*}
\ep_{1j}:=\frac{M_{n_0}(C_j)}{\log C_j} \quad {\rm and} \quad \ep_{2j}:=\frac{M_{24}(B_j)}{\log B_j}.
\end{align*}
Let $m_k\leq N$.  Suppose 
\begin{align*}
\ep_N:=\underset{t_{1j}+t_{2j}>0, j\leq N}\max \{\ep_{1j}, \ep_{2j}\}.
% \quad {\rm and} \quad \underset{j\leq N}\max \{\ep_{1j}, \ep_{2j}\}\leq \ep_N.  
\end{align*}
Then for $\ep_0\geq \ep_N$, we have 
\begin{align}\label{Mne0}
M_{n_0}(\cD)\leq \ep_0\left(\log \cD-\sum_jt_{1j}\lambda_{1j}-\sum_jt_{2j}\lambda_{2j} \right)
\leq \ep_0\log \cD
\end{align}
where 
$$\lambda_{1j}:=\left(1-\frac{\ep_{1j}}{\ep_0}\right)\log C_j
\quad {\rm and} \quad \lambda_{2j}:=\left(1-\frac{\ep_{2j}}{\ep_0}\right)\log B_{j}.$$
\end{lemma}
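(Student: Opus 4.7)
The plan is to establish this lemma by direct bookkeeping: multiplicativity of $M_{n_0}$ on integer products combined with the algebraic identities built into the definitions of $\ep_{1j}, \ep_{2j}, \lambda_{1j}, \lambda_{2j}$. No analytic input is needed beyond what is already in the section.

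First, I record from the definition \eqref{Mn} that $M_{n_0}$ is logarithmically additive on products: for any positive integers $\ell_1,\ell_2$,
\begin{equation*}
M_{n_0}(\ell_1\ell_2)=M_{n_0}(\ell_1)+M_{n_0}(\ell_2),
\end{equation*}
since $p$-adic valuations are additive and the congruence condition $p\equiv\pm1\pmod{n_0}$ does not depend on $\ell$. Applying this to $\cD=\prod_j C_j^{t_{1j}}B_j^{t_{2j}}$ and invoking the definitions $M_{n_0}(C_j)=\ep_{1j}\log C_j$ and $M_{n_0}(B_j)=\ep_{2j}\log B_j$ (reading the second subscript as $n_0$, the evident intent), I obtain
\begin{equation*}
M_{n_0}(\cD)=\sum_j t_{1j}\,\ep_{1j}\log C_j+\sum_j t_{2j}\,\ep_{2j}\log B_j.
\end{equation*}

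Next I expand the candidate upper bound. Substituting $\lambda_{1j}=(1-\ep_{1j}/\ep_0)\log C_j$ and $\lambda_{2j}=(1-\ep_{2j}/\ep_0)\log B_j$ gives $\ep_0\sum_j t_{ij}\lambda_{ij}=\ep_0\sum_j t_{ij}\log(\cdot)-\sum_j t_{ij}\ep_{ij}\log(\cdot)$ for each of $i=1,2$, so
\begin{equation*}
\ep_0\Bigl(\log\cD-\sum_j t_{1j}\lambda_{1j}-\sum_j t_{2j}\lambda_{2j}\Bigr)=\sum_j t_{1j}\ep_{1j}\log C_j+\sum_j t_{2j}\ep_{2j}\log B_j,
\end{equation*}
after the $\ep_0\log\cD$ term cancels against the $\ep_0$-multiples of the $\log C_j,\log B_j$ arising from the "$1$" in $1-\ep_{ij}/\ep_0$. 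Thus the first inequality of \eqref{Mne0} in fact holds as an equality.

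For the second inequality, I need $\sum_j t_{1j}\lambda_{1j}+\sum_j t_{2j}\lambda_{2j}\ge 0$. The hypothesis $m_k\le N$ forces $t_{1j}=t_{2j}=0$ for all $j>N$, so only indices $j\le N$ with $t_{1j}+t_{2j}>0$ contribute. For these, the definition of $\ep_N$ together with $\ep_0\ge \ep_N$ yields $\ep_0\ge\max\{\ep_{1j},\ep_{2j}\}$, hence $1-\ep_{ij}/\ep_0\ge 0$ and $\lambda_{1j},\lambda_{2j}\ge 0$. Weighting by the nonnegative integers $t_{1j},t_{2j}$ keeps the sum nonnegative. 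There is essentially no obstacle: the proof is a linearity identity for $M_{n_0}$ followed by a sign check on the correction term, and the only mildly subtle point is the matching between $\ep_{ij}$ and $M_{n_0}$ (taking the $M_{24}$ in the statement as a typo for $M_{n_0}$).
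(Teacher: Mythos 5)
Your proof is correct and is essentially the same as the paper's: both decompose $M_{n_0}(\cD)=\sum_j t_{1j}\ep_{1j}\log C_j+\sum_j t_{2j}\ep_{2j}\log B_j$ by additivity, then rewrite $\ep_{ij}=\ep_0(1-(1-\ep_{ij}/\ep_0))$ and collect to obtain the first bound, with the second following from nonnegativity of the $\lambda$'s. Your two side observations — that the first relation in \eqref{Mne0} actually holds with equality, and that the $M_{24}$ in the definition of $\ep_{2j}$ must be read as $M_{n_0}$ — are both accurate and sharpen what the paper states.
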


\begin{proof}
Observe that $\ep_{1j}=\ep_{2j}=0$ for $2j<Q_{n_0}$.  We have 
\begin{align*}
M_{n_0}(\cD)&=\sum_j(\ep_{1j}t_{1j}\log C_{j}+\ep_{2j}t_{2j}\log B_{j})\\
&=\ep_0\sum_j\left(\left(1-\left(1-\frac{\ep_{1j}}{\ep_0}\right)\right)t_{1j}\log C_{j}\right.\\
& +
\left.\left(1-\left(1-\frac{\ep_{2j}}{\ep_0}\right)\right)t_{2j}\log B_{j}\right)\\
&\leq \ep_0\left(\log \cD-\sum_jt_{1j}\lambda_{1j}-\sum_jt_{2j}\lambda_{2j} \right).  
\end{align*}
Hence the proof. 
\end{proof}

The following result follows from \cite[Lemma 9]{LLS-C}. 

\begin{lemma}\label{m14} 
The function $\frac{\log(C_m/2)}{m}$ is increasing  for $m\geq 7$. Hence, 
\begin{align}\label{m1.3}
\log \left(\frac{B_m}{2}\right)>\log \left(\frac{C_m}{2}\right)>\begin{cases}
m \quad &{\rm for} \quad  m\geq 14;\\
1.36m \quad &{\rm for} \quad  m\geq 400;\\
1.38m &{\rm for} \quad  m\geq 2100.
\end{cases}
\end{align}
\end{lemma}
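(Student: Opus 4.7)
The plan is to first establish the monotonicity of $f(m) := \log(C_m/2)/m$ for $m \geq 7$ and then use monotonicity to reduce each of the three lower-bound claims to checking $f$ at a single threshold value $m_0 \in \{14, 400, 2100\}$. The inequality $\log(B_m/2) > \log(C_m/2)$ is immediate from $B_m = (m+1)C_m \geq 2 C_m$ for $m \geq 1$, so I focus entirely on the Catalan part.

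For the monotonicity, the discrete inequality $f(m+1) \geq f(m)$ is equivalent, after clearing denominators and using the Catalan recursion $C_{m+1}/C_m = (4m+2)/(m+2)$, to
\[
\left(\frac{4m+2}{m+2}\right)^{m} \geq \frac{C_m}{2}.
\]
Combined with the standard bound $\binom{2m}{m} \leq 4^m/\sqrt{\pi m}$ (so $C_m/2 \leq 4^m/(2(m+1)\sqrt{\pi m})$), it suffices to prove
\[
\left(\frac{2m+1}{2m+4}\right)^{m} \geq \frac{1}{2(m+1)\sqrt{\pi m}}.
\]
The left-hand side equals $(1-3/(2m+4))^m$; applying the elementary inequality $\log(1-x) \geq -x/(1-x)$ with $x = 3/(2m+4)$ yields
\[
\left(1-\frac{3}{2m+4}\right)^{m} \geq e^{-3m/(2m+1)} > e^{-3/2},
\]
while $2(m+1)\sqrt{\pi m} \geq 4\sqrt{\pi} > e^{3/2}$ already for $m \geq 1$. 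This proves monotonicity of $f$ for all $m \geq 1$, in particular for $m \geq 7$.

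Given monotonicity, the three bounds reduce to verifying $f(14) > 1$, $f(400) > 1.36$, and $f(2100) > 1.38$. The first follows from the exact value $C_{14} = 2\,674\,440$, which gives $\log(C_{14}/2) \approx 14.11$. For the latter two I would invoke Robbins' refined Stirling bounds
\[
\sqrt{2\pi n}\,(n/e)^{n}e^{1/(12n+1)} < n! < \sqrt{2\pi n}\,(n/e)^{n}e^{1/(12n)},
\]
applied to $C_m = (2m)!/(m!(m+1)!)$ to obtain the explicit lower bound
\[
\log(C_m/2) \geq 2m\log 2 - \tfrac{3}{2}\log m - \tfrac{1}{2}\log\pi - \log 2 - \delta(m),
\]
with a small explicit $\delta(m) = O(1/m)$ coming from the Stirling error terms and the expansion of $(m+1)\log(1+1/m)$. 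Plugging in $m = 400$ and $m = 2100$ then yields the two required numerical inequalities.

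The main obstacle is the tightness at $m = 2100$: since $f(m) \to 2\log 2 \approx 1.3863$ as $m \to \infty$, the gap between the threshold $1.38m$ and the actual value of $\log(C_{2100}/2)$ is under one unit, so the Stirling estimate must retain the explicit $-\tfrac{3}{2}\log m - \tfrac{1}{2}\log\pi$ correction and not merely $O(\log m)$. The same issue arises, slightly less acutely, at $m = 400$; once these two numerical checks are made, monotonicity promotes each inequality to all $m \geq m_0$, completing the lemma.
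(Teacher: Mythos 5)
The paper gives no proof of this lemma, merely citing \cite[Lemma~9]{LLS-C}, so there is no in-text argument to compare against; your proposal must stand on its own, and it does. The equivalence $f(m+1)\ge f(m)\iff(C_{m+1}/C_m)^m\ge C_m/2$, the recursion $C_{m+1}/C_m=(4m+2)/(m+2)$, and the standard bound $\binom{2m}{m}\le 4^m/\sqrt{\pi m}$ correctly reduce monotonicity to $\left(1-\tfrac{3}{2m+4}\right)^m\ge \tfrac{1}{2(m+1)\sqrt{\pi m}}$, and your chain $\left(1-\tfrac{3}{2m+4}\right)^m\ge e^{-3m/(2m+1)}>e^{-3/2}$ together with $2(m+1)\sqrt{\pi m}\ge 4\sqrt{\pi}>e^{3/2}$ settles this for every $m\ge 1$, which is slightly more than the stated $m\ge 7$. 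The threshold checks then complete the lemma: $\log(C_{14}/2)\approx 14.107>14$, while your Robbins--Stirling lower bound evaluates to $\log(C_{400}/2)>544.26>544=1.36\cdot 400$ and $\log(C_{2100}/2)>2898.47>2898=1.38\cdot 2100$, with margins of roughly $0.26$ and $0.48$ that comfortably dominate the $O(1/m)$ error terms (a few thousandths at these values of $m$). Your worry about tightness at $m=2100$ is reasonable in spirit---since $2\log 2\approx 1.3863$, the asymptotic surplus over $1.38$ is only about $0.006m$---but the explicit $-\tfrac32\log m-\tfrac12\log\pi$ correction you retain is exactly what closes the gap. The only thing left undone is to actually carry out the arithmetic at $m=400$ and $m=2100$, which you state as a plan rather than executing; the method itself has no gap.
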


The following lemma is analogous to \cite[Lemma 3]{LLS-C} and follows from the fact that 
$s<0$, $r^2<-4s$ and $|\al|^2=-s$ when $\al, \be$ are complex conjugates. 

\begin{lemma}\label{rsBD}
Let $\al, ~\be$  be complex conjugates so that $r^2+4s<0$. Let $c_1\leq |\al|\leq c_2$ where $c_1, c_2$ are positive reals.  Then $c^2_1\leq -s\leq c^2_2$ and $1\leq r<2\sqrt{-s}$. 
\end{lemma}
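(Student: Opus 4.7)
The plan is to unpack Vieta's relations for the quadratic $x^2-rx-s=0$ with complex-conjugate roots, combined with the standing sign convention $r>0$. First I would observe that $\al+\be=r$ and $\al\be=-s$ by expanding $(x-\al)(x-\be)$. Since $\al,\be$ are complex conjugates, $\be=\overline{\al}$, so $\al\be=\al\,\overline{\al}=|\al|^2$. Hence $-s=|\al|^2$, which in particular forces $s<0$.

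From the hypothesis $c_1\le|\al|\le c_2$, squaring gives directly $c_1^2\le -s\le c_2^2$, establishing the first claim. For the second claim, $r$ is an integer that we have arranged (by the replacement $(\al,\be)\mapsto(-\al,-\be)$ described at the start of Section 2) to be positive, and since it is a nonzero integer we have $r\ge 1$. The discriminant condition $r^2+4s<0$, which characterises that $\al,\be$ are non-real complex conjugates, rearranges to $r^2<-4s$; taking positive square roots (using $r>0$ and $-s>0$) yields $r<2\sqrt{-s}$.

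There is no real obstacle here: the lemma is essentially a transcription of Vieta's formulas plus the standing conventions on $(r,s)$, and the text preceding the statement already signposts this by listing the three ingredients $s<0$, $r^2<-4s$, and $|\al|^2=-s$ as the complete content of the proof.
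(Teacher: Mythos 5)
Your proposal is correct and follows exactly the route the paper indicates: the lemma is stated as an immediate consequence of $s<0$, $r^2<-4s$, and $|\al|^2=-s$ together with the standing convention $r>0$, and you have spelled out precisely these three facts via Vieta's formulas and the discriminant condition. Nothing more is needed.
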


\section{Proof of Theorems \ref{thmF} and \ref{thmC} \label{thm-F}}

We recall that for $n\geq 0$
\begin{align*}
U_n=\frac{\alpha^n-\beta^n}{\alpha-\beta} \quad {\rm and} \quad 
V_n= \alpha^n+\beta^n.
\end{align*}
where $\alpha$ and $\beta$ are the roots of the quadratic equation $\lambda^2-r\lambda-s=0$ and 
$r, s$ are coprime nonzero integers with $r^2+4s\neq 0$. We suppose that $\alpha/\beta$ is not a root of 
unity.  We also recall that we assume $r>0$ and $|\al|\geq |\be|$. 
Put $X_n\in \{|U_n|, |V_n|\}$ and $Y\in  \{\cF, \cD\}$ where 
$$
\cF=m_1!m_2!\cdots m_k! \quad {\rm and} \quad 
\cD=D_{m_1}\cdots D_{m_k}, \quad D_{m_i}\in \{C_{m_i}, B_{m_i}\}$$ 
with $1<m_1\leq \cdots \leq m_k$. 

We consider the equations \eqref{eqn1}-\eqref{eqn4} and assume that one of the equations 
has a solution. Hence the equation 
equation 
\begin{align*}
X_n=Y
\end{align*}
%Then 
%$$|U_n|=\cF=m_1!m_2!\cdots m_k! \quad {\rm or} \quad 
%|U_n|=\cD=D_{m_1}\cdots D_{m_k}, \quad D_{m_i}\in \{C_{m_i}, B_{m_i}\}.$$ 
has a solution. As in \cite{LLS-F} and \cite{LLS-C}, for a suitable divisor $n_0$ of $n$, we will compare 
the upper bound of $M_{n_0}(Y)$ given by Lemma \ref{MnBDF} or  \ref{MnBDC} with the 
lower bound of $M_{n_0}(Y)=M_{n_0}(X)$ obtained by using Lemma \ref{primn_0}. 
We will choose a suitable divisor $n_0$ of $n$ such that these bounds contradict each 
other and hence for $n$ with such divisors $n_0$, $X_n=Y$  is not possible.   

We take $n\geq 5$ with $n\notin \{6, 8, 12\}$. Recall that a prime $p\mid X_n$ is 
a primitive divisor of $X_n$ if $p\nmid X_\ell$ for $\ell<n$ and $p\nmid r^2+4s$. It is known that 
primitive divisors of $U_n$ and $V_n$ are congruent to $ \pm 1$ modulo $n$ and $ \pm 1$ modulo $2n$, 
respectively. The solutions when $X_n$ has no primitive divisor were already considered 
in \cite{LLS-F} and \cite{LLS-C}. They are in fact gives by those listed in the statement of Theorems 
\ref{thmF} and \ref{thmC}. 

Thus we assume that $X_n$  has a primitive prime divisor $p$ and so $p\equiv \pm 1\pmod n$.  
Let $P_n$ be the largest primitive divisor of $X_n$.  From equations \eqref{eqn1}-\eqref{eqn2}, 
 we have that $P_n\mid m_k!$  implying $m_k\geq P_n$ and from the equations 
 \eqref{eqn3}-\eqref{eqn4}, we have $P_n|D_{m_k}$ implying  $m_k\geq \frac{P_n+1}{2}$.  
 As defined before, $P_n\geq Q_n$,  the least prime congruent to one of $\pm 1$ modulo $n$. 
 Therefore 
\begin{align}\label{logal}
2|\alpha|^{n}\geq X_n\ge 
\begin{cases}
m_k!\geq P_n!\geq Q_n!, &Y=\cF;\\
C_{m_k}\geq C_{\frac{P_n+1}{2}}\geq C_{\frac{Q_n+1}{2}}, &Y=\cD.
\end{cases}
\end{align}
We obtain by using $t!\ge \sqrt{2\pi t} (t/e)^t>2(t/e)^{t+1/2}$ for $t>1$ and $\log C_t/2>t$ for 
$t\geq 14$ by Lemma \ref{m14} that 
\begin{align}\label{Qn>30}
\begin{split}
\log X_n\geq 
\begin{cases}
\log Q_n!\geq \left(Q_n+\frac{1}{2}\right)(\log Q_n-1), &Y=\cF;\\
\log C_{\frac{Q_n+1}{2}}\geq \frac{Q_n+1}{2}, & Y=\cD, Q_n\geq 14.
\end{cases}
\end{split}
\end{align}
It is easy to observe from above that $\min(Q_n!, C_{\frac{Q_n+1}{2}})> \frac{Q_n+1}{2}$ for 
$Q_n\geq 27$.  

For $n_0\in \{8, 9, 16, 24\}$, or $n_0=q$ an odd prime, we define 
\begin{align}\label{gn}
g(n_0):=\begin{cases}
\frac{2.2}{\varphi(n_0)}, &n_0\in \{8, 9, 16, 24\} \ {\rm or} \ n_0=q\leq 29;\\
\frac{4}{q-1}(1+\log\log q), &n_0=q>29
\end{cases}
\end{align} 
and 
\begin{align}\label{gn}
f(n_0):=\begin{cases}
\frac{\delta_0}{\varphi(n_0)}, &n_0\in \{9, 16, 24\} \ {\rm or} \ n_0=q\leq 29;\\
\frac{3.9}{q-1}+\frac{1.46\log 3q}{q}, &n_0=q>29
\end{cases}
\end{align} 
where $\delta_0$ is given by Lemma \ref{MnBDC}.  We put $h(n_0)=g(n_0), f(n_0)$ 
while considering equations \eqref{eqn1}-\eqref{eqn2} or \eqref{eqn3}-\eqref{eqn4}, respectively. 

\iffalse
By Lemmas  \ref{MnBDF} and \ref{MnBDC}, we have 
\begin{align}\label{Dub}
\begin{split}
M_{n_0}(Y)&\leq h(n_0)\log Y= h(n_0)\log X_n.  
\end{split}
\end{align}
\fi
Assume that 
\begin{align}\label{Dub}
M_{n_0}(Y)\leq \ep(\log Y-\lambda)= \ep(\log X_n-\lambda). 
\end{align}
for some $0<\ep<1$ and $\lambda\geq 0$. By Lemmas  \ref{MnBDF} and \ref{MnBDC}, 
the inequality \eqref{Dub} is valid with $\ep=h(n_0)$ and $\lambda=0$. 

Let $q^{h+t}\mid n$, where $q$ is a prime and let $h>0, t\geq 0$ are integers such that 
$q^h>4$.  Here we take $q>2$ when $X_n=|V_n|$. 
Taking $n_0=q^h$ and using \eqref{n_0U} and \eqref{n_0V} in Lemma \ref{primn_0}, 
we get a lower bound for $M_{n_0}(X_n)=M_{n_0}(Y)$ which we compare with \eqref{Dub}. 
We obtain 
\begin{align*}
\ep(\log X_n-\lambda) \geq \log \frac{|\al^n\pm \be^n|}{|\al^{\frac{n}{q^{1+t}}}\pm \be^{\frac{n}{q^{1+t}}}|}-\log(q^{t+1})
\end{align*}
where the signs $-$ and $+$ are according as $X_n=|U_n|, |V_n|$, respectively.  Put 
$$Z:=Z_{n, q, h, t}:=\frac{|\al^n\pm \be^n|}{|\al^{\frac{n}{q^{1+t}}}\pm \be^{\frac{n}{q^{1+t}}}|} \quad 
{\rm and} \quad Z_1=\frac{X_n}{Z}.$$ Then 
\begin{align}\label{Dcmp}
\begin{split}
0&\geq (1-\ep)\log X_n-\log Z_1-\log(q^{t+1})+\lambda\ep\\
&\geq (1-\ep)\log Z-\ep\log Z_1-\log(q^{t+1})+\lambda\ep.
\end{split}
\end{align}
From $X_n=Y$, this also gives 
\begin{align}\label{al-Y}
\frac{n\log |\al|}{q^{1+t}}\geq \log Z_1-\log 2\geq (1-\ep)\log Y-\log(2q^{t+1})+\lambda \ep. 
\end{align}
Let $c_1, c_2>0$ be such that 
\begin{align}\label{c1c2}
\log Z\geq \log c_1+\frac{c_2n\log |\al|}{q^{t+1}}\geq \log c_1+c_2(\log Z_1-\log 2).
\end{align}
From \eqref{Dcmp} and \eqref{al-Y}, writing $\log X_n=\log Z+\log Z_1$, we get 
\begin{align*}
0\geq &(1-\ep)(\log c_1+c_2(\log Z_1-\log 2))-\ep\log Z_1-\log(q^{t+1})+\lambda\ep\\
\geq & (1-\ep)(\log c_1-c_2\log 2)-\log(q^{t+1})+\lambda\ep+((1-\ep)c_2-\ep)\log Z_1\\
\geq & (1-\ep)(\log c_1-c_2\log 2)-\log(q^{t+1})+\lambda\ep\\
&+((1-\ep)c_2-\ep)\left\{(1-\ep)\log Y-\log(q^{t+1})+\lambda \ep\right\}\\
\geq & (1-\ep)\left\{\log c_1+\log 2-(c_2+1)(\log(2q^{t+1})-\lambda\ep)+((1-\ep)c_2-\ep)\log Y\right\}
\end{align*}
implying 
\begin{align}\label{Yc12}
\log Y\leq \frac{(c_2+1)(\log(2q^{t+1})-\lambda\ep)-\log (2c_1)}{c_2-(c_2+1)\ep}=
\frac{\log(2q^{t+1})-\lambda\ep-\frac{\log (2c_1)}{c_2+1}}{1-\frac{1}{c_2+1}-\ep}
\end{align}
since $\ep<1$. 
\iffalse
Also from \eqref{Dcmp} and \eqref{al-Y} again, 
\begin{align*}
0\geq &(1-\ep)(c_1+\frac{c_2n\log |\al|}{q^{t+1}})-\ep\left( \frac{n\log |\al|}{q^{1+t}}+\log 2 \right)
-\log(q^{t+1})+\lambda \ep
\end{align*}
implying 
\begin{align}\label{alc12}
\frac{n\log |\al|}{q^{1+t}}\leq \frac{\log (2q^{t+1})-\lambda\ep-(1-\ep)\log (2c_1)}{c_2-
(c_2+1)\ep}.
\end{align}
\fi
By Lemmas  \ref{MnBDF} and \ref{MnBDC},  we can take $\ep=h(n_0)$ and $\lambda=0$ in \eqref{Dub}.  
Hence \eqref{Dcmp}, \eqref{al-Y} and \eqref{Yc12} are valid with 
$\ep=h(n_0)$ and $\lambda=0$. We now consider different possibilities for $n$. 

\iffalse
\begin{align}\label{Dcmp}
\begin{split}
0\geq \begin{cases}
(1-\ep)\log |U_n|-\log(q^{t+1})+\lambda\ep
-\log \frac{|\al^{\frac{n}{q^{1+t}}}-\be^{\frac{n}{q^{1+t}}}|}{|\al-\be|}, & 
X_n=|U_n|;\\
(1-\ep)\log |V_n|-\log(q^{t+1})+\lambda\ep-\log |\al^{\frac{n}{q^{1+t}}}+\be^{\frac{n}{q^{1+t}}}|, & X_n=|V_n|. 
\end{cases}
\end{split}
\end{align}
From $X_n=Y$, this also gives 
\begin{align}\label{al-Y1}
\frac{n\log |\al|}{q^{1+t}}\geq (1-\ep)\log Y-\log(2q^{t+1})+\lambda \ep. 
\end{align}
We consider different possibilities for $n$. 

Let $q^{h+t}\mid n$, where $q$ is a prime and let $h>0, t\geq 0$ are integers such that 
$q^h>4$.  Here we take $q>2$ when $X_n=|V_n|$. 
Taking $n_0=q^h$ and using \eqref{n_0U} and \eqref{n_0V} in Lemma \ref{primn_0}, 
we get a lower bound for $M_{n_0}(X_n)=M_{n_0}(Y)$ which we compare with \eqref{Dub}. 
We obtain 
\begin{align*}
h(n_0)\log X_n \geq \log \frac{|\al^n\pm \be^n|}{|\al^{\frac{n}{q^{h+t}}}\pm \be^{\frac{n}{q^{h+t}}}|}-\log(q^{t+1})
\end{align*}
where the signs $-$ and $+$ are according as $X_n=|U_n|, |V_n|$, respectively.  This imply 
\begin{align}\label{Dcmp}
\begin{split}
0\geq \begin{cases}
(1-h(n_0))\log |U_n|-\log(q^{t+1})-\log \frac{|\al^{\frac{n}{q^{1+t}}}-\be^{\frac{n}{q^{1+t}}}|}{|\al-\be|}, & 
X_n=|U_n|;\\
(1-h(n_0))\log |V_n|-\log(q^{t+1})-\log |\al^{\frac{n}{q^{1+t}}}+\be^{\frac{n}{q^{1+t}}}|, & X_n=|V_n|. 
\end{cases}
\end{split}
\end{align}
From $X_n=Y$, this also gives 
\begin{align}\label{al-Y}
\frac{n\log |\al|}{q^{1+t}}\geq (1-h(q^h))\log Y-\log(2q^{t+1}). 
\end{align}
\fi

\subsection{$n$ is prime} 

Let $q\geq 5$ be a prime. We consider $n=q$ and $X_n=|U_n|$.  
Taking $n_0=n=q, t=0$ and $\ep=h(q), \lambda=0$ in \eqref{Dcmp} ($Z_1=1$ here), 
and using $Q_q\geq 2q-1$ in \eqref{Qn>30}, we obtain 
\begin{align*}
1\geq \frac{(1-h(q))\log |U_q|}{\log q}\geq \begin{cases}
(1-g(q))(2q-1) & {\rm if} \ Y=\cF\\
\frac{(1-f(q))\log C_{\frac{Q_q+1}{2}}}{\log q} & {\rm if} \ Y=\cD, q<31\\
\frac{(1-f(q))q}{\log q} & {\rm if} \ Y=\cD, q\geq 31. 
\end{cases}
\end{align*}
We find that the last quantity is always $>1$ which is a contradiction. 
Thus $n\geq 5$ is composite when $X_n=|U_n|$. 

Let $X_n=|V_n|$ and $n=q>787$. Taking $n_0=n=q, t=0$ 
and $\ep=h(q), \lambda=0$ in \eqref{al-Y}, we get 
$$\frac{\log|\al|+\log 2q}{1-h(q)}\geq \log |V_q|.$$
If $\log |\al|\geq 1.35(1-f(q))q\geq 1.35(1-h(q))q$, then we get a contradiction by 
Lemma \ref{p-fp} for $q$. Therefore $\log |\al|<1.35(1-f(q))q$. Hence 
$$1.35q+\frac{\log 2q}{1-f(q)}> %\frac{\log|\al|+\log 2q}{1-h(q)}\geq 
\log |V_q|\geq \begin{cases}
(2q-\frac{1}{2})(\log (2q-1)-1) & {\rm if} \ Y=\cF \\
\log C_{q}\geq 1.37q & {\rm if} \ Y=\cD
\end{cases}
$$
by \eqref{Qn>30}, $Q_q\geq 2q-1$ and $\log C_q\geq qC_{787}/787>1.37q$ 
by using Lemma \ref{m14}.  This is a contradiction.  Thus $n=q\leq 787$ when $X_n=|V_n|$. 

\subsection{$q|n$ with $q\geq 5$ and $|\al^{\frac{n}{q}}\pm \be^{\frac{n}{q}}|\leq \frac{2|\al|^{\frac{n}{q}}}{q}$}

Let $q\geq 5$ be a prime and $q|n$. Assume that 
$|\al^{\frac{n}{q}}\pm \be^{\frac{n}{q}}|\leq \frac{2|\al|^{\frac{n}{q}}}{q}$. 
We take $n_0=q, t=0$ and $\ep=h(q), \lambda=0$ in \eqref{Yc12}. 
By \eqref{p|n<}, we have $c_1=1$ and $c_2=q-1$ and hence \eqref{Yc12} gives 
\begin{align*}
0\geq & (1-\frac{1}{q}-h(q))\log Y-(\log 2q-\frac{\log 2}{q})\\
\geq & \begin{cases}
(1-\frac{1}{q}-g(q))\log Q_{q}!-\log 2q+\frac{\log 2}{q} & {\rm if} \ Y=\cF, q<31\\
(1-\frac{1}{q}-f(q))\log C_{\frac{Q_{q}+1}{2}}-\log 2q+\frac{\log 2}{q} & {\rm if} \ Y=\cD, q<31\\
(1-\frac{1}{q}-h(q))q-\log 2q+\frac{\log 2}{q} & {\rm if} \ q\geq 31. 
\end{cases}
\end{align*}
We find that the last quantity is $>0$ except when $q\in \{5, 7\}$ and $Y=\cD$. Let 
$q\in \{5, 7\}$ and $Y=\cD$.  Then 
\begin{align}\label{<a^n/q}
\log C_{m_k}\leq \log \cD \leq \frac{\log 2q-\frac{\log 2}{q}}{1-\frac{1}{q}-f(q)}
\end{align}
implying $m_k\leq 13, 8$ according as $q=5, 7$, respectively. 
By  Lemma \ref{mk<N} with $(n_0, N)=(5, 13), (7, 8)$, we have 
\eqref{Dub} with $\lambda=0$ and $\ep=0.55, 0.43,$ respectively, in turn gives 
\eqref{<a^n/q} with $f(q)=0.55, 0.43$ according as $q=5, 7$, respectively. Then we 
get $m_k\leq 9, 6$ according as $q=5, 7$, respectively. Since 
$2m_k\geq Q_q$, we further obtain $q=5$ and $6\leq m_k\leq 9$. Since $5|n$ and 
$n\neq 5$, we observe that $X_n=\cD$ is divisible, by primitive divisors of both $U_5$ 
and $U_n$ when $X_n=|U_n|$, and primitive divisors of both $V_5$ and $V_n$ when 
$X_n=|V_n|$ and these are primes $\equiv \pm 1\pmod{5}$.  However $11$ is the only 
prime $\equiv \pm 1\pmod{5}$ dividing $C_j$ or $B_j$ with $6\leq j\leq 9$. This is a 
contradiction. Thus $|\al^{\frac{n}{q}}\pm \be^{\frac{n}{q}}|>\frac{2|\al|^{\frac{n}{q}}}{q}$ for 
each $q|n, q\ge 5$.  

\subsection{$q^2|n$ with $q\geq 3$}

Let $9|n$. We take $n_0=9=3^2, t=0$ and $\ep=h(9), \lambda=0$ in \eqref{Yc12}. By 
\eqref{3q|n}, we have $c_1=0.56$ and $c_2=2$ and hence \eqref{Yc12} gives 
\begin{align}\label{9fd}
\frac{3\log 6-\log 1.12}{2-3h(9)}\geq \log Y\geq 
\begin{cases}
m_k! & {\rm if} \ Y=\cF\\
C_{m_k} & {\rm if} \ Y=\cD.
\end{cases}
\end{align}
This is a contradiction for $m_k>8, 21$ according as $Y=\cF, \cD$, respectively. Since 
$2m_k\geq Q_9=17$, we have   $Y=\cD$ and $9\leq m_k\leq 21$. 
By  Lemma \ref{mk<N} with $(n_0, N)=(9, 21)$, we have 
\eqref{Dub} with $\lambda=0$ and $\ep=0.594$ which in turn gives 
\eqref{9fd} with $f(q)=0.594$. This is a contradiction since $m_k\geq 9$. 

Let $q^2|n$ with $q\geq 5$. Then $Q_n\geq 2q^2-1$. We take $n_0=q, t=1$ and 
$\ep=h(q), \lambda=0$ in \eqref{Yc12}. Since 
$|\al^{\frac{n}{q}}\pm \be^{\frac{n}{q}}|>\frac{2|\al|^{\frac{n}{q}}}{q}$,  
from \eqref{p|n<}, we have 
$c_1=\frac{Q_n}{q}\geq \frac{Q_{q^2}}{q}$ and $c_2=q-1$ and hence \eqref{Yc12} 
gives 
\begin{align*}
0\geq &(1-\frac{1}{q}-h(q))\log Y-\left(\log (2q^2)-\frac{\log \frac{2Q_{q^2}}{q}}{q}\right)\\
\geq & \begin{cases}
(1-\frac{1}{q}-g(q))\log Q_{q^2}!-\log (2q^2)+ \frac{\log (2Q_{q^2})-\log q}{q} & {\rm if} \ Y=\cF, q<31\\
(1-\frac{1}{q}-f(q))\log C_{\frac{Q_{q^2}+1}{2}}-\log (2q^2)+\frac{\log (2Q_{q^2})-\log q}{q} & 
{\rm if} \ Y=\cD, q<31\\
(1-\frac{1}{q}-h(q))q^2-\log (2q^2)+\frac{\log (4q-\frac{2}{q})}{q} & {\rm if} \ q\geq 31. 
\end{cases}
\end{align*}
using $f(q)\geq g(q)$, \eqref{Qn>30} and $Q_{q^2}\geq 2q^2-1$.  We find that the 
above inequality is not valid. This is a contradiction.  

\subsection{$pq|n$ with $q\geq 5$ and $3\leq p<q$}

Let $pq|n$ with $3\leq p<q$. Then $Q_n\geq Q_{pq}\geq 2pq-1\geq 6q-1$.  
We take $n_0=q, t=0$ and $\ep=h(q), \lambda=0$ in \eqref{Yc12}.  
Since $|\al^{\frac{n}{p}}\pm \be^{\frac{n}{p}}|>\frac{2|\al|^{\frac{n}{p}}}{p}$,  
from \eqref{p|n<}, we have $c_1=\frac{Q_{pq}}{p}\geq 2q-\frac{1}{3}$ and 
$c_2=\frac{q-1}{p}\geq 1+\frac{1}{q-2}$ since $q\geq p+2$ and hence \eqref{Yc12} 
gives 
\begin{align*}
0\geq & \left(1-h(q)-\frac{1}{c_2+1}\right) \log Y-\left(\log 2q-\frac{\log 2(2q-\frac{1}{3})}{c_2+1}\right)\\
\geq & \left(1-h(q)-\frac{1}{2+\frac{1}{q-2}}\right)\log Y-\left(\log 2q-\frac{\log (4q-
\frac{2}{3}}{2+\frac{1}{q-2}}\right)\\
\geq & \begin{cases}
(1-h(q)-\frac{29}{59})3q-\left(\log 2q-\frac{29\log (4q-\frac{2}{3}}{59}\right) & 
{\rm if} \ q\geq 31\\
\left(1-g(q)-\frac{1}{2+\frac{1}{q-2}}\right)\log (6q-1)!-\left(\log 2q-\frac{\log (4q-
\frac{2}{3}}{2+\frac{1}{q-2}}\right) & {\rm if} \ Y=\cF, q<31\\
\left(1-f(q)-\frac{1}{2+\frac{1}{q-2}}\right)\log C_{3q}-\left(\log 2q-\frac{\log (4q-
\frac{2}{3}}{2+\frac{1}{q-2}}\right) & {\rm if} \ Y=\cD, 7<q<31\\
\left(1-f(q)-\frac{1}{2+\frac{1}{q-2}}\right)\log C_{m_k}-\left(\log 2q-\frac{\log (4q-
\frac{2}{3}}{2+\frac{1}{q-2}}\right) & {\rm if} \ Y=\cD, q=5, 7
\end{cases}
\end{align*}
by using \eqref{logal}. We find that this is a contradiction except when 
$Y=\cD$ and $q\in \{5, 7\}$.  Let $Y=\cD$ and $q\in \{5, 7\}$.  Since 
$3\leq p<q$, we  have $(p, q)\in \{(3, 5), (3, 7), (5, 7)\}$.  Further for 
$q=7$, from $Y=\cD\geq C_{m_k}$, we obtain $m_k\leq 64$. 

Let $(p, q)=(5, 7)$. Then $2m_k\geq Q_n\geq Q_{35}=71$ implies $35\leq m_k\leq 64$. 
We use Lemma \ref{mk<N} with $n_0=5, N=64$. We have  \eqref{Dub} with 
$\ep=0.532$ and 
$$\lambda \ep \geq \ep\min_{35\leq j\leq 64}(\min(\lambda_{1j}, \lambda_{2j}))>9.$$ 
As before, taking $n_0=7, t=0$,  we obtain  \eqref{Yc12} with  
$c_1=\frac{71}{5}, c_2=\frac{6}{5}$ and $\ep=0.532, \lambda \ep>9$. Hence 
\begin{align*}
\log C_{m_k}\leq \log \cD<\frac{\log 14-9-\frac{\log \frac{142}{5}}{\frac{11}{5}}}{
\frac{6}{11}-0.532}<0
\end{align*}
which is a contradiction. 

Let $(p, q)\in \{(3, 5), (3, 7)\}$. We have from \eqref{3q|n} and $Q_{n/3}\geq Q_q$ that 
\begin{align*}
\frac{|\al^n\pm \be^n|}{|\al^{\frac{n}{q}}\pm \be^{\frac{n}{q}}|}=&
\frac{|\al^n\pm \be^n|}{|\al^{\frac{n}{3}}\pm \be^{\frac{n}{3}}|}
\frac{|\al^{\frac{n}{3}}\pm \be^{\frac{n}{3}}|}{|\al^{\frac{n}{3q}}\pm \be^{\frac{n}{3q}}|}
\frac{|\al^{\frac{n}{3q}}\pm \be^{\frac{n}{3q}}|}{|\al^{\frac{n}{q}}\pm \be^{\frac{n}{q}}|}\\
\geq &0.56|\al|^{\frac{2n}{3}}Q_{\frac{n}{3}}\frac{1}{3|\al|^{\frac{2n}{3q}}}\geq 
\frac{0.56Q_q}{3}|\al|^{\frac{2(q-1)n}{3q}}.  
\end{align*}
We take $n_0=q, t=0$ and $\ep=f(q), \lambda=0$ in \eqref{Yc12}. 
By \eqref{p|n<}, we have $c_1=\log \frac{0.56Q_q}{3}$ and 
$c_2=\frac{2(q-1)}{3}$ since $q\geq p+2$ and hence \eqref{Yc12}  gives 
\begin{align}\label{15C}
\frac{\log 2q-\frac{c_1+\log 2}{c_2+1}}{1-f(q)-\frac{1}{c_2+1}}\geq \log \cD\geq \log C_{m_k}.
\end{align}
This is a contradiction for $m_k\geq 23, 18$ according as $q=5, 7$, respectively. Since 
$2m_k\geq Q_{3q}$, we get $q=5$ and further $15\leq m_k\leq 22$. We also find that 
$\log C_{10}C_{15}$ exceeds the left hand side of \eqref{15C} and hence 
$\cD<C_{10}C_{15}$. Thus either $\cD=D_{m_k}$ for some $15\leq m_k\leq 22$ or 
$\cD=D_1\cdots D_{m_{k-1}}D_{m_k}$ with $m_{k-1}\leq 9<15\leq m_k\leq 22$. 
Suppose $m_k\neq 17$. We use Lemma \ref{mk<N} with $n_0=5, N=22$. 
We have $t_{1j}+t_{2j}=0$ for $j=17$ in Lemma \ref{mk<N} and hence we have  
\eqref{Dub} with $\ep=0.608$ $\lambda=0$. Putting $0.61$ in place of $h(5)$ in \eqref{Dub}, we 
get the inequality \eqref{15C} with $0.608$ in place of $f(q)=f(5)$. This gives a contradiction 
for $m_k\geq 15$. Thus $m_k=17$ and hence $M_{15}(\cD)\leq M_{15}(B_{17})=\log 29+\log 31$. 
On the other hand, by \eqref{n_0U} in Lemma \ref{primn_0} with $n_0=15, t=0$ along 
with \eqref{3q|n} gives 
\begin{align*}
\exp(M_{15}(X_n))\geq \frac{1}{5}\frac{|\al^n\pm \be^n|}{|\al^{\frac{n}{3}}\pm \be^{\frac{n}{3}}|}
\frac{|\al^{\frac{n}{15}}\pm \be^{\frac{n}{15}}|}{|\al^{\frac{n}{5}}\pm \be^{\frac{n}{5}}|}
\geq \frac{0.56|\al|^{\frac{2n}{3}}}{5}\frac{1}{3|\al|^{\frac{2n}{15}}}>0.037|\al|^{\frac{8n}{15}}. 
\end{align*}
Thus 
\begin{align*}
\log 29+\log 31\geq M_{15}(\cD)=M_{15}(X_n)>\log 0.037+\frac{8n\log |\al|}{15}. 
\end{align*}
We have \eqref{al-Y} with $n_0=q=5, t=1, \ep=f(5)=2.61/4, \lambda=0$ which together with 
 $Y=\cD\geq C_{17}$ gives 
\begin{align*}
0>&\frac{8}{3}((1-f(5))\log C_{17}-\log 10)+\log 0.037-\log 29-\log 31. 
\end{align*}
This is a contradiction. 
 
\subsection{$n$ even and $X_n=|V_n|$}

Let $n$ be even and $X_n=|V_n|$. From \cite{LLS-F} and \cite{LLS-C}, we may assume 
$Y=\cD$ and $2||q$. Further from previous subsections, we need to consider $n=2q$ with 
$q\geq 5$. Let $n=2q$ with $q>397$. Taking $n_0=q$ and $t=0$ in \eqref{al-Y}, we get 
$$\frac{2\log|\al|+\log 2q}{1-f(q)}\geq \log |V_{2q}|=|\al^{2q}+\be^{2q}|.$$
If $\log |\al|\geq 1.35(1-f(q))2q$, then we get a contradiction by 
Lemma \ref{p-fp} for $2q$. Therefore $\log |\al|<2.7(1-f(q))q$. 
Observe that primitive divisors of $V_{2q}$ are primitive divisors of $U_{4q}$ and hence 
$P_{2q}|V_{2q}$ with $P_{2q}\geq Q_{4q}\geq 4q-1$ giving $2m_k>4q-1$. Thus  
$|V_{2q}|\geq C_{m_k}\geq C_{2q}\geq 2qC_{2\cdot 397}/(2\cdot 397)>2.74q$  
by Lemma \ref{m14}.  Hence 
$$2.74q<|V_{2q}|\leq \frac{2\log|\al|+\log 2q}{1-f(q)}<2.7q+\frac{\log 2q}{1-f(q)}. 
$$
This is a contradiction.  Thus $n=2q$ with $q\leq 397$ when $X_n=|V_n|=\cD$ 
and the proof of Theorem \ref{thmF} for the equation \eqref{eqn2} and Theorem \ref{thmC} 
for the equation \eqref{eqn4} are complete. 

\subsection{$n$ even and $X_n=|U_n|$}

Let $n$ be even and $X_n=|U_n|$. From previous subsections, we need to consider 
$n$ with $n=2^\nu q$, $q\geq 5, \nu\geq 1$ or $16|n$ or $24|n$. Let $Y=\cF$. Then 
from $m_k\geq Q_n\geq n-1$. From \eqref{logal}, we have 
$$\log |\al|\geq \frac{(n-\frac{1}{2})(\log (n-1)-1)}{n}>\log 3 \quad {\rm for} \ n\geq 10.$$
Thus $|\al|>3$ when $Y=\cF$. 

Let $32|n$. Let $Y=\cD$. Since $Q_n\geq n-1$, we have  
$m_k\geq \frac{Q_{n}+1}{2}\geq \frac{n}{2}$ and so that 
$$\frac{\log \cD}{n}\geq \frac{\log C_{\frac{n}{2}}}{n}\geq \frac{\log C_{16}}{32}$$
by Lemma \ref{m14}.  Suppose $|\al|<3$. Taking $n_0=16=2^4, t=1$, we have  
$\ep=f(16), \lambda=0$ in \eqref{Dub} and hence in \eqref{al-Y} which gives 
\begin{align*}
0\geq &  \frac{(1-f(16))\log \cD-\log 8-\frac{n\log 3}{4}}{n}\\
\geq &(1-f(16))\frac{\log C_{16}}{32}-\frac{\log 3}{4}-\frac{\log 8}{32}>0.
\end{align*}
This is a contradiction and hence $|\al|\geq 3$. 

Now we take $Y\in \{\cF, \cD\}$ again. Taking $n_0=16=2^4, t=1$, we have  
$\ep=h(16), \lambda=0$ in \eqref{Dub} and hence in \eqref{Yc12}. From \eqref{2|n} with 
$p_0=2^2$, we have $2c_1=\frac{7}{4}\times 31=54.25$ and 
$c_2=1$ and hence \eqref{Yc12} gives 
\begin{align*}
\frac{\log 8-\frac{\log 54.25}{2}}{\frac{1}{2}-h(16)}\geq \log Y
\geq & \begin{cases}
\log 31! & {\rm if} \ Y=\cF\\
\log C_{15} & {\rm if} \ Y=\cD
\end{cases}
\end{align*}
by using \eqref{logal}. This is a contradiction. 

Thus $n\in \{16, 24, 48\}$ or $n=2^\nu q$ with $q\geq 5$ and $1\leq \nu\leq 4$. 

\subsection{$n$ even with ord$_2(n)\leq 4$}

Now we modify the inequality \eqref{al-Y}. Using $|(\al^l-\be^l)/(\al-\be)|\leq l|\al|^l$, we 
have from \eqref{Dcmp}
\begin{align}\label{al-ev}
(\frac{n}{q^{1+t}}-1)\log |\al| \geq \log Z_1-\log \frac{n}{q^{1+t}} 
\geq (1-\ep)\log Y-\log n+\lambda \ep. 
\end{align}
This in turn gives 
\begin{align}\label{Yev}
\log Y\leq 
\frac{\log n-\lambda\ep-\frac{\log (\frac{nc_1}{q^{t+1}})}{\frac{c_2}{1-\frac{q^{t+1}}{n}}+1}}
{1-\frac{1}{\frac{c_2}{1-\frac{q^{t+1}}{n}}+1}-\ep}.
\end{align}
\iffalse
and
\begin{align}\label{al-ev-bd}
\frac{n\log |\al|}{q^{1+t}}\leq \frac{\log n-\lambda\ep-(1-\ep)\log (\frac{c_1n}{q^{1+t}})}{c_2-
(c_2+1-\frac{q^{t+1}}{n})\ep}.
\end{align}
\fi

\subsection{$n=2^\nu q$ with $q\geq 5$ and $1\leq \nu\leq 4$}

Let $n=2^\nu q$ with $q\geq 5$ and $1\leq \nu\leq 4$.  Then $n\geq 10$ and  
$Q_n\geq n-1$. Suppose $|\al|<3$. Then $Y=\cD$ and 
$$\frac{\log \cD}{n}\geq \frac{\log C_{\frac{n}{2}}}{n}\geq \frac{\log C_{q}}{2q} \quad {\rm for} \ n\geq 14$$
by Lemma \ref{m14}. Taking $n_0=q, t=0$,  we have  
$\ep=f(q), \lambda=0$ in \eqref{Dub} and in \eqref{al-ev} which gives  
\begin{align*}
0\geq & \frac{(1-f(q))\log \cD-\log n-(\frac{n}{q}-1)\log 3}{n}\\
\geq &\begin{cases}
\frac{(1-f(q))\log C_{31}}{62}-\frac{\log 2q}{2q}-\frac{\log 3}{q}
& {\rm if} \ q\geq 31, n\geq 2q\\
\frac{(1-f(q))\log C_{q}}{2q}-\frac{\log 2q}{2q}-(\frac{1}{q}-\frac{1}{16q})\log 3& 
{\rm if} \ 11\leq q<31, n\geq 2q\\
\frac{(1-h(q))\log C_{m_k}}{n}-\frac{\log n}{n}-(\frac{1}{q}-\frac{1}{n})\log 3 & 
{\rm if} \ 5\leq q\leq 7, n=2^\nu q. 
\end{cases}
\end{align*}
We find that that it is a contradiction for $q\geq 11$. For $q\in \{5, 7\}$ and $1\leq \nu\leq 4$, 
we obtain the upper bound of $m_k$ given by the following table. 
\begin{center}
\begin{tabular}{|c||c|c|c|c|c} \hline
$\nu $ & $1$ & $2$  & $3$ & $4$     \\ \hline 
$q=5$ & $10$ & $16$ & $27$ & $47$   \\ \hline 
$q=7$ & $8$ & $13$ & $21$ & $36$   \\ \hline 
\end{tabular}
\end{center}
Since $2m_k\geq Q_n$, we need to consider $n\in \{10, 20, 40, 80, 14\}$. Observe that 
 $U_{14}$ is divisible by primitive divisors of $U_{7}$ and $U_{14}$ and these are 
 primes $\equiv \pm 1\pmod{q}$. Since $13$ is the only prime $\equiv \pm 1\pmod{7}$ 
dividing $C_jB_j$ with $j\leq 14$, we have a contradiction when $n=14$. For  
$U_{n}$ with $n\in \{10, 20, 40, 80\}$ and $|\al|<3$, we check that the equation 
\eqref{eqn3} does not hold. 

Thus we have $|\al|\geq 3$. Taking $n_0=q, t=0$,  we have  
$\ep=h(q), \lambda=0$ in \eqref{Dub} and in \eqref{Yev}.  
From \eqref{2|n} with $p_0=q$, we have 
$c_1=\frac{7}{8}\min(Q_{n/2}, n+1)\geq \frac{7}{8}(2q-1)$ and $c_2=\frac{q-1}{2}$ so that
$$\frac{nc_1}{q} \geq  \left(\frac{7}{4}-\frac{7}{8q}\right)n
  \quad {\rm and} \quad 
\frac{c_2}{1-\frac{q}{n}}+1=\frac{q+1-2^{1-\nu}}{2-2^{1-\nu}}\geq q.$$
Hence, by using \eqref{logal}, the inequality \eqref{Yev} gives 
\begin{align*}
0\geq &\left(1-\frac{1}{\frac{c_2}{1-\frac{q}{n}}+1}-h(q)\right)\log Y-
\left(\log n-\frac{\log (\frac{nc_1}{q})}{\frac{c_2}{1-\frac{q}{n}}+1}\right)\\
\geq & \begin{cases}
(1-\frac{1}{q}-h(q))2^{\nu-1}q-\left(1-\frac{1}{q}\right)\log (2^{\nu}q)+\frac{\log\left(\frac{7}{4}-\frac{7}{8q}\right)}{q}, 
&  q\geq 31;\\
(1-\frac{1}{q}-g(q))\log (2^{\nu}q-1)!-\left(1-\frac{1}{q}\right)\log (2^{\nu}q)+
\frac{\log\left(\frac{7}{4}-\frac{7}{8q}\right)}{q}, & \ Y=\cF,  q<31;\\
(1-\frac{1}{q}-f(q))\log C_{2^{\nu-1}q}-\left(1-\frac{1}{q}\right)\log (2^{\nu}q)+\frac{\frac{7}{4}-\frac{7}{8q}}{q}, 
& Y=\cD,\\
&  7<q<31;\\ 
(1-\frac{2-2^{1-\nu}}{q+1-2^{1-\nu}}-f(q))\log C_{m_k}-\log n+ &Y=\cD, \\
\frac{2-2^{1-\nu}}{q+1-2^{1-\nu}}\log (7\cdot 2^{\nu-3}\min(Q_{n/2}, n+1)), 
& q=5, 7;
\end{cases}
\end{align*}
where $n=2^\nu q$ with $1\leq \nu\leq 4$.  For each $1\leq \nu\leq 4$, we get a 
contradiction for $Y=\cF$ and for $q>7$ when $Y=\cD$. Let $Y=\cD$ and $q\in \{5, 7\}$.  
The inequality implies $m_k<\frac{Q_n+1}{2}$ when $n\in \{28, 56, 112\}$ which is 
a contradiction and further $m_k\leq 7, 11, 25, 48, 81$  according as 
$n=14, 10, 20, 40, 80,$ respectively. 

Let $n=14$ and $m_k=7$. Then $13$ is the only prime $\equiv \pm 1\pmod{7}$ 
dividing $C_jB_j$ with $j\leq 14$. Since $U_{14}$ is divisible by primitive divisors of 
$U_7$ and $U_{14}$ which are primes $\equiv \pm 1\pmod{7}$,  we get a contradiction. 

Thus we need to consider $q=5$ and $n\in \{10, 20, 40, 80\}$.  Then $10\leq m_k\leq 81$. 
We now use Lemma \ref{mk<N} with $n_0=5$ and 
$N=81$. We get  \eqref{Dub} with $\ep=\ep_0=0.6501$ and 
$$\lambda \ep=\sum_{j\leq 81}(t_{1j}\ep\lambda_{1j} +t_{2j}\ep\lambda_{2j})=:\mathcal{L}.$$
As before, taking $n_0=5, t=0$,  we have  $\ep=0.6501, \lambda=\mathcal{L}$ in \eqref{Dub} 
and hence from \eqref{Yev}, we obtain for $n\in \{10, 20, 40, 80\}$, 
\begin{align}\label{1020}
\begin{split}
\log \cD\leq &\frac{\log n-
\frac{2-2^{1-\nu}}{6-2^{1-\nu}}\log (7\cdot 2^{\nu-3}\min(Q_{n/2}, n+1))-\mathcal{L}}{
1-\frac{2-2^{1-\nu}}{6-2^{1-\nu}}-0.6501}\\
<&\frac{2.355-\mathcal{L}}{1-\frac{2-2^{\nu-1}}{q+1-2^{1-\nu}}-0.6501}.
\end{split}
\end{align}
We find that $\ep\lambda_{1j}, \ep\lambda_{2j}>2.355$ for $j\geq 9,  j\neq 16, 17$. 
Thus $m_k\in \{16, 17\}$ or $m_k\leq 8$ by \eqref{1020}. From $2m_k>Q_n$, we further 
obtain $n\in \{10, 20\}$. Let $n=10$. Then considering primitive divisors of $U_5$ and 
$U_{10}$, we have $m_k\geq 10$. This with $m_k\leq 11$ gives $10\leq m_k\leq 11$ 
which is not possible. Thus $n=20$ and further $16\leq m_k\leq 17$. From \eqref{2|n} with 
$p_0=5$ and $|\al^{n/q}-\be^{n/q}|>\frac{2|\al|^{n/q}}{q}$ for each $q|n$, we obtain 
\begin{align*}
\begin{split}
|U_n|=\left|\frac{\al^n-\be^n}{\al^{\frac{n}{5}}-\be^{\frac{n}{5}}}
\frac{\al^{\frac{n}{5}}-\be^{\frac{n}{5}}}{\al-\be}\right|
\geq \frac{7}{8}\times 11|\al|^{\frac{2n}{5}}\frac{2|\al|^{\frac{n}{5}}}{5}\frac{1}{2|\al|}
=\frac{77|\al|^{\frac{3n}{5}-1}}{20}.
\end{split}
\end{align*}
Hence from $\cD=|U_n|, n=20=2^2\cdot 5$ and \eqref{1020}, we obtain 
\begin{align*}
\log |\al|\leq &\frac{1}{11}\left(\frac{\log n-
\frac{2-2^{-1}}{6-2^{-1}}\log (7\cdot 2^{-1}\cdot 11)}{
1-\frac{2-2^{-1}}{6-2^{-1}}-0.6501}-\log \frac{77}{20}\right)<2.234
\end{align*}
or $|\al|<9.333$. For $(r, s)$ given by Lemma \ref{rsBD} with $3\leq |\al|<9.333$ and 
$P(U_{20})\leq P(B_{17})=31$, we check that \eqref{eqn3} does not hold at $n=20$. 

\subsection{$n\in \{16, 24\}$}

We need to consider $n\in \{16, 24, 48\}$. Let $n=48$. Taking 
$n_0=16=2^4, t=0$, we have $\ep=h(16), \lambda=0$ in \eqref{Dub} 
and hence in \eqref{Yev}.  From \eqref{3q|n} with $p^l=2$, we have 
$c_1=3\times 0.56Q_{16}/3=0.56\times 17/3\geq 3.17$ and 
$c_2=\frac{2}{3}$ and hence \eqref{Yev} gives 
\begin{align*}
\frac{\log 48-\frac{\log (24\times 3.17)}{\frac{19}{11}}}{1-\frac{11}{19}-h(16)}
\geq \log Y \begin{cases}
C_{24} & {\rm if } \ Y=\cD\\
47! & {\rm if } \ Y=\cF
\end{cases}
\end{align*}
since \eqref{logal}. This is a contradiction. 

Let $n=16$ and $Y=\cF$. Recall that $|\al|\geq 3$. Taking $n_0=8=2^3, t=1$,  we have  
$\ep=g(8)=\frac{2.2}{4}, \lambda=0$ in \eqref{Dub} and in \eqref{Yev}.  
From \eqref{2|n} with $p_0=2^2$, we have 
$c_1=\frac{7\times 17}{8}$ and $c_2=1$ so that  \eqref{Yev} gives 
\begin{align*}
\frac{\log 16-\frac{3\log \frac{7\cdot 17}{2}}{7}}{1-\frac{3}{7}-0.55}
\geq \cF\geq \log m_k!
\end{align*}
implying $m_k\leq 21$.  By \eqref{Mnub812} in Lemma \ref{MnBDF}, we have 
$M_{8}(\cF)\leq 0.3\cF$. Using $0.3$ in place of $g(8)=0.55$ and proceeding as before, 
we have the above inequality with $0.55$ replaced by $0.3$.  This gives a contradiction 
since  $m_k\geq Q_{16}=17$.  

Finally we consider $n=24$ in the next section. 

\subsection{$n=24$}

Let $n=24$. We write $\log |U_{24}|=23\log |\al|+\log |\frac{1-x^{24}}{1-x}|$ where 
$x=\be/\al$ as before and $|x|=1$. We have
\begin{align*}
\left|\frac{1-x^{24}}{1-x}\right|=\left|\frac{(1-x^{12})(1+x^4)}{1-x}\frac{(1+x^{12})}{1+x^4}\right|
\leq 24\left|\frac{1+x^{12}}{1+x^4}\right|
%&\leq h\left(23\log \al+\log 24+\log \left|\frac{1+x^{12}}{1+x^4}\right|-\lambda \right)
\end{align*}
by using $|(1-x^l)/(1-x)|\leq l$ for $l\geq 1$ and $|1+x^4|\leq 2$. Hence for any real $\mu>0$, we have 
from Lemma \ref{x3/x} that ,
$$\mu \left|\frac{1-x^{24}}{1-x}\right|-\log \left|\frac{1+x^{12}}{1+x^4}\right|+\log 2
\leq \mu \log 24-(1-\mu)\log 0.56+\log 2<1.273+2.6\mu. 
$$
Taking 
$n_0=24=2^3\cdot 3, t=0$, we have from \eqref{n_0U} in Lemma \ref{primn_0} that 
\begin{align*}
M_{24}(U_{24})&\geq \log \frac{|\al^{12}+\be^{12}|}{|\al^{4}+\be^{4}|}-\log 2
=8\log |\al|+\log \left|\frac{1+x^{12}}{1+x^4}\right|-\log 2\\
&\geq  \frac{8}{23}\left(\log |U_{24}|-\log \left|\frac{1-x^{24}}{1-x}\right|\right)
+\log \left|\frac{1+x^{12}}{1+x^4}\right|-\log 2
\end{align*}
implying 
\begin{align}\label{al-bd}
\log |\al| &\leq \frac{M_{24}(U_{24})-\log \left|\frac{1+x^{12}}{1+x^4}\right|+\log 2}{8}
\leq \frac{M_{24}(U_{24})+\log \frac{2}{0.56}}{8},
\end{align}
by using Lemma \ref{x3/x}, and
\begin{align}\label{U-bd}
\begin{split}
\log |U_{24}|&\leq \frac{23}{8}\left(M_{24}(U_{24})+\frac{8}{23}\log \left|\frac{1-x^{24}}{1-x}\right|
-\log \left|\frac{1+x^{12}}{1+x^4}\right|+\log 2\right)\\
&\leq \frac{23}{8}\left(M_{24}(U_{24})+1.273+\frac{8\times 2.6}{23}\right). 
\end{split}
\end{align}

Let $h>0$ and $\lambda\geq 0$ be such that 
\begin{align}\label{Dub24}
\begin{split}
M_{24}(\cD) &\leq h(\log \cD-\lambda)=h\left(23\log \al+\log \left|\frac{1-x^{24}}{1-x}\right|-\lambda \right).
\end{split}
\end{align}
Since $U_{24}=\cD$, we get from \eqref{al-bd} and \eqref{U-bd} that 
\begin{align}\label{8-23al}
\log |\al| &\leq \frac{h\log \left|\frac{1-x^{24}}{1-x}\right|-\log \left|\frac{1+x^{12}}{1+x^4}\right|+\log 2-h\lambda}{8-23h}<\frac{1.273+h(2.6-\lambda)}{8-23h}
\end{align}
and
\begin{align}\label{8-23U}
\begin{split}
\log |U_{24}|&\leq \frac{23}{8-23h}\left(\frac{8}{23}\log \left|\frac{1-x^{24}}{1-x}\right|
-\log \left|\frac{1+x^{12}}{1+x^4}\right|+\log 2-h\lambda\right)\\
&\leq \frac{23}{8-23h}\left(1.273+\frac{8\times 2.6}{23}-h\lambda\right)
<\frac{50.08-h\lambda}{8-23h}.
\end{split}
\end{align}

By \eqref{Dub}, we have $h=h(24)$ and $\lambda=0$ in \eqref{Dub24}. Hence from 
\eqref{8-23U}, we obtain  
\begin{align*}
\frac{50.08}{8-23h(24)}>\log |U_{24}|\geq 
\begin{cases}
\log m_k! & {\rm if} \ Y=\cF\\
\log C_{m_k} & {\rm if} \ Y=\cD.
\end{cases}
\end{align*}
The above inequality is not valid for $m_k\geq 16,  350$ according as $Y=\cF, \cD,$ respectively. 
Since $m_k\geq Q_{24}=23$, we a contradiction when $Y=\cF$. 
Thus we consider $Y=\cD$ where we have $m_k<350$.  

Let $\delta_0=\frac{1}{2.75}$. First assume that $|1+x^4|<\delta_0$. From $|x|=1$ and the  
inequality $|1-x^4|+|1+x^4|\geq 2$, we obtain 
$$4|1-x|\geq |1-x^4|\geq 2-\delta_0 \quad {\rm implying} \quad \left|\frac{1-x^{24}}{1-x}\right|\leq 
\frac{8}{2-\delta_0}. 
$$
Also 
$$\left|\frac{1+x^{12}}{1+x^4}\right|=|(1+x^4)^2-3x^4|\geq 3-\delta^2_0.$$
Putting this in \eqref{8-23U}, we get 
\begin{align}\label{<del0}
\log C_{m_k}\leq \log \cD=\log |U_{24}|\leq \frac{8\log \frac{8}{2-\delta_0}-23\log \frac{3-\delta^2_0}{2}-h\lambda}{8-23h}.
\end{align}
By \eqref{Dub}, we have $h=f(24)=\frac{2.746}{8}$ and $\lambda=0$ in \eqref{Dub24}. The inequality 
\eqref{<del0} is not valid for $m_k\geq 35$. Thus $m_k<35$. By Lemma \ref{mk<N} with $n_0=24$ and 
$N=34$, we get $M_{24}(\cD)<0.257\log \cD$. Putting $h=0.257, \lambda=0$ in  \eqref{Dub24}, we obtain 
\eqref{<del0} with $h=0.257$. We have $m_k\geq (Q_{24}+1)/2=12$ and \eqref{<del0} is not valid for 
$m_k\geq 12$. This is a contradiction. 

Thus we have $|1+x^4|\geq \delta_0$. We follow the proof as in \cite[Section 4.4]{LLS-C}. Recall that 
$m_k<350$. Suppose $|\al|\leq 50$. Then $\log C_{m_k}\leq \log |U_{24}|\leq 23\log |\al|+\log 24$ gives 
$m_k\leq 72$. We refer to notations of Lemma \ref{mk<N}. Let $t_{1j}+t_{2j}=0$ 
for $j\in \{37, 38, 39, 40, 41, 42, 43\}$. We use Lemma \ref{mk<N} with $n_0=24$ and $N=72$ to 
find that $M_{24}(\cD)<0.29\log \cD$. 
Putting $h=0.29, \lambda=0$ in  \eqref{Dub24}, we obtain \eqref{8-23al} and \eqref{8-23U} with $h=0.29$.  
We obtain $\log |\al|<1.525$ or $|\al|<4.6$ and $m_k\leq 31$ by using $|U_{24}|\geq C_{m_k}$. Again, using 
Lemma \ref{mk<N} with $n_0=24$ and $N=31$, we find that $M_{24}(\cD)\leq 0.257\log \cD$. Putting 
$h=0.257, \lambda=0$ in \eqref{Dub24}, we obtain \eqref{8-23al} and \eqref{8-23U} with $h=0.257$. 
This gives $|\al|\leq 2.54$ and $m_k\leq 21$.  Furthermore we have
$P(U_{24})\leq P(B_{21})\leq 41$.  We check that the equation \eqref{eqn3} with $P(U_{24})\leq 41$ and 
$\al\leq 2.54$ is not possible. Here, we use Lemma \ref{rsBD}  to find all possible pairs $(r, s)$ with 
$\al\leq 2.54$.  Thus, we assume  $t_{1j}+t_{2j}>0$ for some $j\in \{37, 38, 39, 40, 41, 42, 43\}$. From 
$$\log C_{37}+\log C_{39}>23\log 50+\log 24\geq \log |U_{24}|,$$
we obtain $j\leq m_k\leq 43$. Hence, $P(U_{24})\leq P(B_{43})\leq 81$. Further 
$47\cdot 53\cdot 59\cdot 61\cdot 67\cdot 71\cdot 73\mid U_{24}$ 
also since $47\cdot 53\cdot 59\cdot 61\cdot 67\cdot 71\cdot 73\mid C_{j}\mid B_j$ for $j\in \{37, 38, 39, 40, 41, 42, 43\}$. Also 
$$\log |\al|\geq \frac{\log C_{37}-\log 24}{23}>2.0379.$$ 
For the pairs $(r, s)$ with $2.0379<\log |\al| \leq \log 50$ given by Lemma \ref{rsBD}, we check that 
both $P(U_{24})\leq 89$ and $47\cdot 53\cdot 59\cdot 61\cdot 67\cdot 71\cdot 73\mid U_{24}$  are 
not possible. Therefore the  equation \eqref{eqn3} has no solution when $|\al|\leq 50$.

From now on, we assume that $|\al|>50$. Suppose that $t_{1j}=0$ for $j\in \{37, 38\}$. 
We use Lemma \ref{mk<N} with $n_0=24$ and $N=350$ to find that $M_{24}(\cD)<0.324\log \cD$. 
Putting $h=0.324, \lambda=0$ in  \eqref{Dub24}, we obtain \eqref{8-23al} with $h=0.324$. This gives 
$\log |\al|<\log 50$ which is a contradiction.  Therefore, we have $t_{1j}>0$ for $j=37$ or $j=38$. 
By  Lemma \ref{mk<N} with $n_0=24$ and $N=350$, we can take 
$h=\ep_0=0.3433$ and $\lambda=\sum_jt_{1j}\lambda_{1j}+\sum_jt_{2j}\lambda_{2j}$ in \eqref{Dub24} which 
gives 
\begin{align*}
\log |\al|<\frac{1.273+0.3433\left(2.6-\sum_jt_{1j}\lambda_{1j}-\sum_jt_{2j}\lambda_{2j}\right)}{8-23\times 0.3433}
\end{align*}
by \eqref{8-23al}. Since $|\al|>50$, this gives 
\begin{align}\label{24main}
\sum_jt_{1j}\lambda_{1j}+\sum_jt_{2j}\lambda_{2j}<\frac{2.166-0.104\log 50}{0.3433}<5.13.
\end{align}
Recall that 
$$\lambda_{1j}=\left(1-\frac{\ep_{1j}}{\ep_0}\right)\log C_j
\quad {\rm and} \quad \lambda_{2j}=\left(1-\frac{\ep_{2j}}{\ep_0}\right)\log B_{j}.$$
We compute the values of $\lambda_{1j}$ and $\lambda_{2j}$ for $j<350$ and find that 
$$
\lambda_{1j}\leq 5.13 \quad {\rm for} \quad j\in T_1:=\{j: j\leq 6\}\cup \{12, 13, 37, 38, 39, 40\},
$$
and 
$$
\lambda_{2j}\leq 5.13 \quad {\rm for} \quad j\in  T_2:=\{j: j\leq 4\}\cup \{37, 38\}.
$$
Thus, by \eqref{24main},  we may suppose that $t_{1j}>0$ implies $j\in T_1$ and $t_{2j}>0$ implies 
$j\in T_2$.  Recall that we have $t_{1j}>0$ for  $j=37$ or $j=38$.  Write $t_4, t_5, t_{12}$ for $t_{1j}$ 
according as $j=4, 5, 12$, respectively. We find that $\lambda_{1j}\geq 2.639, 3.737, 3.111$ according 
to whether $j=4, 5, 12$, respectively. Hence, from \eqref{24main}, we have 
$t_4\leq 1, t_5\leq 1$ and $t_{12}\leq 1$. Put   
\begin{align*}
\log \cD_1:=\sum_{j\in T_1, j\neq 4, 5, 12}t_{1j}\log C_j+\sum_{j\in T_2}t_{2j}\log B_j,
\end{align*}
so that 
\begin{align}\label{80}
\log \cD&=\log \cD_1+t_4\log C_4+t_5\log C_5+t_{12}\log C_{12}.
\end{align}
We now consider $M_8(\cD)$ given by \eqref{Mn}. 
We find that $M_8(C_j)<0.46\log C_j$ for all $j\in T_1$ except when $j\in \{ 4, 5, 12\}$ and  
$M_8(B_j)<0.46\log B_j$ for $j\in T_2$ and further 
$$M_8(C_4)\leq 0.74, \quad M_8(C_5)\leq 0.53 \quad {\rm and} \quad M_8(C_{12})\leq 0.65. 
$$ 
Hence, from \eqref{80}, $\log |U_{24}|=23\log |\al|+\left|\frac{1-x^{24}}{1-x}\right|$ and 
$\left|\frac{1-x^{24}}{1-x}\right|\leq 12$, we get   
\begin{align*}
M_8(\cD)< &0.46\log \cD_1+0.74t_4\log C_4+0.53t_5\log C_5+0.65t_{12}\log C_{12}\\
<&0.46\log \cD+0.28t_4\log C_4+0.07t_5\log C_5+0.19t_{12}\log C_{12}\\
< &0.46(23\log |\al|+\log 12+\log (1+x^{12}))\\
&+0.28\log C_4+0.07\log C_5+0.19\log C_{12},
\end{align*}
since $t_4, t_5, t_{12}\leq 1$. 
Comparing the above inequality with the lower bound of $M_8(\cD)=M_8(U_{24})$ 
given by \eqref{n_0U} with $n_0=2^3$ and $t=0$, we obtain 
\begin{align*}
4.48>&0.28\log C_4+0.07\log C_5+0.19\log C_{12}+0.46\log 12\\
>&(12-0.46\times 23)\log |\al|+(1-0.46)\log |1+x^{12}|-\log 2\\
>&(12-0.46\times 23)\log 50-\log 2+(1-0.46)\log 0.56>4.54
\end{align*}
since $\al>50$, provided $|1+x^{12}|>0.56$, which is  a contradiction. We show that 
$|1+x^{12}|>0.56$. We have $|1+x^4|=\delta\geq \delta_0=\frac{1}{2.75}$. Hence 
\begin{align*}
|1+x^{12}|=\begin{cases}
|1+x^{4}||(1+x^{12})^2-3x^4|\geq \delta_0|3-1|>0.56 & {\rm if} \ \delta\leq 1\\
|1+x^{4}|\left|\frac{1+x^{12}}{1+x^4}\right|>1\times 0.56=0.56 & {\rm if} \ \delta>1
\end{cases}
\end{align*}
by using Lemma \ref{x3/x}. Therefore the equation \eqref{eqn3} has no solution with $n=24$ 
when $\al$ and $\be$ are complex conjugates.  This completes the proof of Theorem \ref{thmF}. 
\qed

\section*{Acknowledgements}

The author acknowledges the support of SERB MATRICS Project.

\end{document}